\documentclass{article}

\usepackage{microtype}
\usepackage{graphicx}
\usepackage{subcaption}
\usepackage{booktabs} 
\usepackage{enumitem}

\newcommand{\E}{\mathbb{E}}

\newcommand{\indep}{\perp \!\!\! \perp}
\newcommand{\Pb}{\mathbb{P}}

\newcommand{\obs}{\mathrm{obs}}
\newcommand{\argmin}{\mathop{\mathrm{arg \, min}}}

\newcommand{\ind}{\mathds{1}}

\usepackage{dsfont} 

\newcommand{\mis}{\mathrm{mis}}

\global\long\def\esp{\mathbb{E}}%
\global\long\def\F{\mathcal{F}}%
\global\long\def\R{\mathbb{R}}%
\usepackage[preprint]{icml2026}

\usepackage{amsmath}
\usepackage{amssymb}
\usepackage{mathtools}
\usepackage{amsthm}

\usepackage{hyperref}
\usepackage[capitalize,noabbrev]{cleveref}

\theoremstyle{plain}
\newtheorem{theorem}{Theorem}[section]
\newtheorem{propo}[theorem]{Proposition}
\newtheorem{proposition}[theorem]{Proposition}
\newtheorem{lemma}[theorem]{Lemma}

\theoremstyle{definition}

\newtheorem{assumption}{Assumption}
\theoremstyle{remark}
\newtheorem{remark}[theorem]{Remark}

\newcommand{\rk}[1]{}
\newcommand{\al}[1]{}
\definecolor{brickred}{rgb}{0.8, 0.25, 0.33}

\usepackage[textsize=tiny]{todonotes}

\icmltitlerunning{Handling Covariate Mismatch in Federated Linear Prediction}

\begin{document}

\twocolumn[
  \icmltitle{Handling Covariate Mismatch in Federated Linear Prediction}



  \icmlsetsymbol{equal}{*}

  \begin{icmlauthorlist}
    \icmlauthor{Alexis Ayme}{equal,ens}
    \icmlauthor{R\'emi Khellaf}{equal,montpellier}
    
  \end{icmlauthorlist}

  \icmlaffiliation{ens}{Département d’Informatique, École Normale Supérieure - PSL CNRS, Paris, France}
    \icmlaffiliation{montpellier}{Inria PreMeDICaL, Inserm, University of Montpellier, France}

  \icmlcorrespondingauthor{R\'emi Khellaf}{remi.khellaf@inria.fr}

  \icmlkeywords{Statistical Learning}

  \vskip 0.3in
]



\printAffiliationsAndNotice{}  

\begin{abstract}
Federated learning enables institutions to train predictive models collaboratively without sharing raw data, addressing privacy and regulatory constraints. In the standard horizontal setting, clients hold disjoint cohorts of individuals and collaborate to learn a shared predictor. Most existing methods, however, assume that all clients measure the same features. We study the more realistic setting of \emph{covariate mismatch}, where each client observes a different subset of features, which typically arises in multicenter collaborations with no prior agreement on data collection. We formalize learning a linear prediction under client-wise MCAR patterns and develop two modular approaches tailored to the dimensional regime and communication budget. In the low-dimensional setting, we propose a plug-in estimator that approximates the oracle linear predictor by aggregating sufficient statistics to estimate the covariance and cross-moment terms. In higher dimensions, we study an impute-then-regress strategy: (i) impute missing covariates using any exchangeability-preserving imputation procedure, and (ii) fit a ridge-regularized linear model on the completed data. We provide asymptotic and finite-sample learning rates for our predictors, explicitly characterizing their behaviour with the global dimension, the client-specific feature partition, and the distribution of samples across sites.
\end{abstract}

\section{Introduction}
The federated learning framework \citep{kairouz2021advances} enables institutions to train predictive models collaboratively by exchanging model updates or summary information, without sharing raw individual-level data. In many multicenter collaborations, however, data are not collected in a harmonized fashion across sites (hereafter referred to as ``clients''): they may measure different sets of variables, rely on different instruments, or encode the same underlying quantity under different names. In healthcare, for instance, hospitals typically contribute disjoint patient cohorts (the standard horizontal setting) while recording different clinical attributes and laboratory measurements. This practical reality induces a mismatch in their covariates: local datasets share a common prediction target but differ in their observed feature spaces. From a statistical standpoint, the resulting learning problem can be seen as a structured missing-data setting \citep{RUBIN76}, where missingness is blockwise and fully determined by the client index, rather than approximately i.i.d.\ across entries. As a consequence, much of the classical missing-data literature—which primarily focuses on entrywise patterns or homogeneous missingness mechanisms—does not directly reflect the constraints and opportunities created by federated data silos.

In this work, we aim to give statistical learning theory guidelines under blockwise missingness in multicentric studies with federated constraints. We characterize when and how cross-site correlations can be leveraged to recover predictive signal under unobserved coordinates, and we derive rates that make explicit the roles of ambient dimension, site-specific feature partitions, and sample allocation across centers. The high-dimensional regime is especially relevant: as federated datasets scale (in numbers and sizes), imperfect harmonization and aliasing across measurement pipelines can inflate the nominal dimension while inducing strong correlations among features, suggesting that ridge-type analyses—which rely on the effective rather than nominal dimension of the data—provide an informative lens for understanding performance in modern federated learning.

\textbf{Our Contributions.} We introduce a theoretical and practical framework for federated learning under covariate mismatch. Our contributions are:
\vspace{-.7em}
\begin{itemize}[leftmargin=*]
    \item \textbf{Problem formulation of covariate mismatch}. In \Cref{sec:setting}, we formalize the setting where clients observe different feature subsets, and we frame the goal as learning, for each client, the best linear approximation to the Bayes predictor---bridging personalization and global pooling.
    \item One-shot learning in low dimension. In \Cref{sec:PlugIn}, we propose a consistent predictor based on aggregating local second moments. The method recovers the population solution and generalizes to new clients unseen during training.
    \item Risk guarantees for Impute-then-Regress in high dimension. In \Cref{sec:ITR}, we study scalable Impute-then-Regress schemes (from naive to iterative) and prove a \textbf{distribution-free finite-sample bound} (\Cref{thm:imputeechangeable}) that holds for any exchangeable imputation rule. We discuss their practical implementation in the federated setting and analyze their communication costs.
    \item Clear scenarios of positive collaborative learning. In \Cref{sec:Comparison}, we compare collaborative learning to local training and show that, in high-dimensional or fragmented regimes, federated imputation---even naive---can dominate by stabilizing the effective dimension, clarifying the bias--variance trade-off.
\end{itemize}


\paragraph{Related Work.}
Prediction with missing features has been studied extensively, including approaches where the predictor depends on the missingness pattern \citep{lemorvana,le2020neumiss,ayme2022near} in the linear setting. The "Impute-then-Regress" strategy is a standard alternative, proven to be consistent for prediction tasks \citep{josse2019consistency, bertsimas2021beyond,le2021sa}. Imputation methods vary widely in complexity, ranging from classical iterative equations (MICE) \citep{van2011mice} to advanced approaches using Optimal Transport \citep{muzellec2020missing} or Generative Adversarial Networks (GANs) \citep{yoon2018gain, ipsen2022deal}. Recent works have extended these generative methods to federated settings \citep{balelli2023fed, yu2024communication}. From a statistical perspective, high-dimensional learning with missing values has been analyzed primarily in homogeneous settings, where missingness is unstructured (e.g., entry-wise Bernoulli) rather than block-wise across clients \citep{lounici2014high, agarwal2019robustness,sportisse2020debiasing, ayme2023naive, ayme2024random, park2025overparametrized}. Our theoretical framework builds upon the extensive literature on random design linear prediction \citep{tsybakov2003optimal,gyorfi2006distribution, bach2013non} and Ridge regression \citep{hsu2012random, dieuleveut2017harder}, leveraging recent finite-sample characterizations of generalization error to handle the federated mismatch \citep{richards2021asymptotics, mourtada2022elementary}.


\textbf{Notations. } For any positive integer $n$, let $[n] \coloneqq \{1, \dots, n\}$. We generally index data samples by $i$ and feature dimensions by $j$. Given a vector $x$ and a subset of indices $\mathcal{J}$, we denote by $[x]_{\mathcal{J}}$ (or simply $x_{\mathcal{J}}$) the sub-vector formed by the entries of $x$ indexed by $\mathcal{J}$. For matrices $A$ and $B$ of compatible dimensions, $A \odot B$ denotes their Hadamard (element-wise) product ($\oslash$ the division). We use the Mahalanobis notation $\Vert u\Vert_M^2= u^\top Mu$. 

\section{Multicenter covariate-mismatch setting}\label{sec:setting}
We consider a federated learning setting with $K$ clients communicating via a central server. Each client $k \in [K]$ holds a local dataset of size $n_k$, contributing to a total sample size $n = \sum_{k=1}^K n_k$. We tackle the challenging setting of \emph{covariate mismatch}, where clients systematically observe a specific subset of features, denoted by an index set $\obs(k)\subset \{1, \dots, d\}$, where $[d]=\cup_{k=1}^K \obs(k)$ is the dimension of the concatenated data. For each sample $i$ collected at client $k$, the available data are $\left(H_i, [X_i]_{\obs(H_i)},Y_i\right)$ where $H_i\in[K]$ denotes the client index, $X_i\in\mathbb{R}^d$ is the complete feature vector and $[X_i]_{\obs(k)}\in\mathbb{R}^{|\obs(k)|}$ is its restriction to the observed features at client $k$, and $Y_i\in\mathbb{R}$ the outcome.  
We assume that the training samples $\{(H_i, [X_i]_{\obs(H_i)}, Y_i)\}_{i=1}^n$ are drawn independently and identically distributed (i.i.d.) from a distribution $\mathcal{P}$. For notational simplicity, we will denote a generic test observation drawn from $\mathcal{P}$ (independent of the training set) as $(H, [X]_{\obs(H)}, Y)$.

Finally, we define the population second moment matrix $\Sigma := \E[XX^\top] \in \mathbb{R}^{d \times d}$ and the cross-moment vector $\gamma := \E[XY] \in \mathbb{R}^d$.

\begin{assumption}[Homogeneity/MCAR]\label{as:mcar_iid}
    $H\indep (X,Y)$
\end{assumption}

\Cref{as:mcar_iid} states that clients differ only through their observation sets $\obs(k)$: the missingness mechanism is block-wise Missing Completely At Random (MCAR), driven entirely by $H$, and the joint law of $(X,Y)$ is shared across clients. This implies: (i) the population moments $(\gamma, \Sigma)$ are shared across sites and are identifiable; and (ii) the co-observation probability matrix $\Pi = \E[M M^\top]$ admits a site-mixture decomposition $\Pi_{lj} = \sum_{k} \rho_k \ind_{\{l, j \in \obs(k)\}}$, where $\rho_k = \Pb(H=k)$ is the client proportion and $M \in \{0,1\}^d$ is the binary observation mask. Consequently, this framework posits a stable environment, explicitly ruling out both covariate shift ($X \not\indep H$) and concept shift ($Y \not\indep H \mid X$).

\paragraph{Local and global risks.}
We now define the risks associated with the learning task. Local performance at client \(k\) is measured by the conditional risk
\begin{equation*}
    R^{(k)}(f^{(k)}) := \E \left[ \left(Y - f^{(k)}([X]_{\obs(k)}) \right)^2 | H=k\right],
\end{equation*}
where the expectation is taken over the whole dataset and the test point. The global risk assesses performance across the entire population. We define a global prediction strategy $f$ that acts on the available data $([X]_{\obs(H)}, H)$ by applying the client-specific function $f^{(H)}$. The global risk is therefore the expectation over the client mixture:
\begin{align*}
    R(f) &:= \E \left[ \left( Y - f^{(H)}([X]_{\obs(H)}) \right)^2 \right] \\
    &= \sum_{k=1}^K \rho_k R^{(k)}(f^{(k)}),
\end{align*}
where $\rho_k = \Pb(H=k)$.

\subsection{Linear prediction with block-wise missing data}
Let $\theta_\star$ denote the best linear predictor from the \emph{complete} covariates:
$$\theta_\star \in \arg\min_{\theta\in\mathbb R^d}\E\big[(Y-X^\top\theta)^2\big],$$
so that $\Sigma\theta_\star=\gamma$ and
$\theta_\star=\Sigma^{-1}\gamma,$ assuming $\Sigma$ is invertible.
Writing $Y = X^\top\theta_\star + \varepsilon$, the residual satisfies the orthogonality condition $\E[\varepsilon X]=0$.
However, note that even when the global relationship is well-specified (i.e. $\E[\varepsilon| X]=0$)
, this does not generally imply that the conditional mean is linear in a \emph{subset} of features. In particular, for a client $k$ that observes only $X_{\obs(k)}$, there may not exist $\theta^{(k)}$ such that
$Y = X_{\obs(k)}^\top \theta^{(k)} + \varepsilon$ with $\E[\varepsilon|\, X_{\obs(k)}]=0$.
Indeed, non linear correlations between observed and unobserved covariates can make $\E[Y\mid X_{\obs(k)}]$ a nonlinear function of $X_{\obs(k)}$.

Accordingly, throughout we characterize each client $k$ by its \emph{best linear approximation} based on the features it observes, i.e., the $L_2$ projection of $Y$ onto the linear span of $X_{\obs(k)}$.

\paragraph{Site-wise linear class and oracle risks.}
We study the class $\mathcal{F}_{\mathrm{lin}}$ of client-wise linear functions.
A function $f \in \mathcal{F}_{\mathrm{lin}}$ is parameterized by a collection of local vectors $\{\theta^{(k)}\}_{k=1}^K$, such that for any client $k \in [K]$ and local feature vector $x \in \mathbb{R}^{|\obs(k)|}$, the prediction is given by:
\begin{equation*}
    f(x, k) = x^\top \theta^{(k)}, \quad \text{with } \theta^{(k)} \in \mathbb{R}^{|\obs(k)|}.
\end{equation*}

 We also define $\mathcal{F}^{(k)}_{\mathrm{lin}}$ the set of linear predictor for site $k$.

\begin{propo}\label{prop:best_linear}
    Under \Cref{as:mcar_iid}, the best predictor in $\mathcal{F}^{(k)}_{\mathrm{lin}}$ for client $k$, denoted by $\theta_\star^{(k)}$, is the solution to the local least-squares problem. It admits the following two equivalent representations:
    \begin{equation*}
        \theta_\star^ {(k)} =\left\{ \begin{array}{l}
              \Sigma_{\obs (k)}^{-1}\gamma_{\obs (k)}\\
              (\theta_{\star})_{\obs(k)}+ \Sigma_{\obs(k)}^{-1}\Sigma_{\obs(k),\mis(k)}(\theta_{\star})_{\mis(k)},
        \end{array} \right. 
    \end{equation*}
    where $\Sigma_{\obs(k)}$ denotes the principal submatrix $\Sigma_{\obs(k),\obs(k)}$.
    The associated optimal global risk is the weighted sum of the optimal local risks:
\begin{equation}\label{eq:optimal_global_linear_risk}
R_\star(\mathcal{F}_{\mathrm{lin}}) := \sum_{k=1}^K \rho_k R_\star^{(k)}(\mathcal{F}^{(k)}_{\mathrm{lin}}),
\end{equation}
where the minimal local risk $R_\star^{(k)}(\mathcal{F}^{(k)}_{\mathrm{lin}})$ is given by:
\begin{equation*}
    R_\star^{(k)} (\mathcal{F}^{(k)}_{\mathrm{lin}})= \sigma^2 + \left\Vert (\theta_{\star})_{\mis(k)} \right\Vert_{V_k}^2.
\end{equation*}
Here, $\sigma^2 = \esp[\varepsilon^2]$ is the  noise variance, and $V_k$ is the Schur complement :
\begin{equation*}
    V_k = \Sigma_{\mis(k)} - \Sigma_{\mis(k),\obs(k)}\Sigma_{\obs(k)}^{-1}\Sigma_{\obs(k),\mis(k)}.
\end{equation*}
\end{propo}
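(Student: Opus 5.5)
The plan is to reduce everything to population second moments, using \Cref{as:mcar_iid} to remove the conditioning on $H=k$. Since $H\indep(X,Y)$, the law of $(X,Y)$ given $H=k$ equals its marginal law. Hence for any $\theta\in\mathbb{R}^{|\obs(k)|}$,
\[
R^{(k)}(\theta)=\E\big[(Y-X_{\obs(k)}^\top\theta)^2\big]=\E[Y^2]-2\theta^\top\gamma_{\obs(k)}+\theta^\top\Sigma_{\obs(k)}\theta .
\]
I will assume $\Sigma$ is invertible, as the paper does, so its principal submatrix $\Sigma_{\obs(k)}$ is positive definite. The objective is then strictly convex, and its first-order condition gives the unique minimizer $\theta_\star^{(k)}=\Sigma_{\obs(k)}^{-1}\gamma_{\obs(k)}$, which is the first representation.

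For the second representation, I will use $\gamma=\Sigma\theta_\star$ and take its $\obs(k)$ block:
\[
\gamma_{\obs(k)}=\Sigma_{\obs(k)}(\theta_\star)_{\obs(k)}+\Sigma_{\obs(k),\mis(k)}(\theta_\star)_{\mis(k)} .
\]
Multiplying on the left by $\Sigma_{\obs(k)}^{-1}$ gives the second formula directly.

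For the risk, I will substitute $Y=X^\top\theta_\star+\varepsilon$. This gives
\[
Y-X_{\obs(k)}^\top\theta_\star^{(k)}=\varepsilon+X_{\mis(k)}^\top(\theta_\star)_{\mis(k)}-X_{\obs(k)}^\top A(\theta_\star)_{\mis(k)}, \qquad A:=\Sigma_{\obs(k)}^{-1}\Sigma_{\obs(k),\mis(k)} .
\]
The main point to check is that the cross terms vanish. This needs only the orthogonality $\E[\varepsilon X]=0$, not $\E[\varepsilon\mid X]=0$: the term $\varepsilon$ is uncorrelated with every linear function of $X$. The remaining term is $Z^\top(\theta_\star)_{\mis(k)}$ with $Z:=X_{\mis(k)}-A^\top X_{\obs(k)}$, the residual of the linear projection of $X_{\mis(k)}$ onto $X_{\obs(k)}$. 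Its second moment is
\[
\E[ZZ^\top]=\Sigma_{\mis(k)}-\Sigma_{\mis(k),\obs(k)}\Sigma_{\obs(k)}^{-1}\Sigma_{\obs(k),\mis(k)}=V_k ,
\]
obtained by expanding and using $\E[X_{\obs(k)}X_{\mis(k)}^\top]=\Sigma_{\obs(k),\mis(k)}$. Therefore $R^{(k)}_\star=\sigma^2+\|(\theta_\star)_{\mis(k)}\|_{V_k}^2$.

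Finally, the global statement \eqref{eq:optimal_global_linear_risk} follows from $R(f)=\sum_k\rho_k R^{(k)}(f^{(k)})$. The parameters $\theta^{(k)}$ of different clients are decoupled in $\mathcal{F}_{\mathrm{lin}}$, so the infimum of this sum is the sum of the separate infima, and each infimum is attained at $\theta_\star^{(k)}$. No step is a real obstacle. The only point needing care is that $\varepsilon$ is merely orthogonal to $X$, which is enough for the cross terms to vanish.
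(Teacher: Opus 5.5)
Your proof is correct and follows essentially the same route as the paper. Both write the residual as $\varepsilon + (X_{\mis(k)} - \Sigma_{\mis(k),\obs(k)}\Sigma_{\obs(k)}^{-1}X_{\obs(k)})^\top(\theta_\star)_{\mis(k)}$, kill the cross terms using only $\E[\varepsilon X]=0$, and identify $V_k$ as the second moment of the projection residual. The one minor difference is that the paper certifies the second representation by checking $\E[\varepsilon' X_{\obs(k)}]=0$ for that residual, whereas you read it off the $\obs(k)$ block of $\Sigma\theta_\star=\gamma$ after solving the normal equations; these are equivalent first-order conditions.
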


Proposition~\ref{prop:best_linear} characterizes, for each client $k$, the {best} linear predictor based solely on the locally observed coordinates $X_{\obs(k)}$, together with the corresponding minimal (oracle) risk.
Much of the related literature on regression with missing values focuses on a {well-specified} Gaussian linear model, which yields particularly convenient closed forms; see, e.g., \citet{le2020linear,ayme2022near}.
In contrast, this result is \emph{distribution-free}: it does not require a correctly specified linear model for $Y\mid X$, nor any Gaussianity assumption on $X$.


\begin{remark}[Singular covariance]
Proposition~\ref{prop:best_linear} remains valid when $\Sigma$ (and hence $\Sigma_{\obs(k)}$) is singular, by replacing matrix inverses with Moore--Penrose pseudoinverses.
\end{remark}

\begin{remark}[Allowing site-dependent covariate distributions]
The result also extends to the case where the conditional covariate distribution varies across sites, i.e., $X\mid(H=k)$ has covariance
$\Sigma^{(k)}=\E[XX^\top\mid H=k]$.
One may then replace $\Sigma$ by $\Sigma^{(k)}$ throughout.
However, this changes the nature of the federated problem: it may require estimating up to $K$ distinct covariance structures (and associated parameters), which can substantially weaken the benefits of collaboration.
\end{remark}

\paragraph{Objective.}
We seek a federated learning procedure that, despite covariate mismatch, outputs a client-aware predictor
$\widehat f\in\mathcal{F}_{\mathrm{lin}}$ whose local risks $\{R_k(\widehat f_k)\}_{k=1}^K$ are close to the oracle levels
$\{R_\star^{(k)}(\mathcal{F}^{(k)}_{\mathrm{lin}})\}_{k=1}^K$, and consequently whose global risk $R(\widehat f)$ approaches
$R_\star(\mathcal{F}_{\mathrm{lin}})$.
A central goal is \emph{information pooling}: by exploiting cross-client structure, we aim to partially recover—on each site—the contribution of the unobserved coordinates $X_{\mis(k)}$ via their correlations with the observed covariates.

\section{Low-Dimensional Plug-in Methods}\label{sec:PlugIn}
We now present a simple \emph{plug-in} strategy based on \emph{cropped predictors}.
The idea is to first estimate the population moments \((\Sigma,\gamma)\) from aggregated data (thus respecting the federated constraints) and then plug these estimates into the closed-form oracle from \Cref{prop:best_linear}.
Concretely, we construct aggregated estimators \((\widehat\Sigma,\widehat\gamma)\) and, for each client \(k\),
form the site-wise coefficient vector by restricting (``cropping'') these global moments to the coordinates observed at that client:
\begin{equation}\label{eq:crop_estimator}
\widehat\theta^{(k)}_{\mathrm{PI}}
\;\coloneqq\;
\big(\widehat\Sigma_{\obs(k)}\big)^{-1}\widehat\gamma_{\obs(k)}.
\end{equation}
The resulting predictor deployed at client \(k\) is
\begin{equation}\label{eq:crop_predicteur}
\widehat f^{(k)}_{\mathrm{PI}}(x)
\;=\;
x^\top \widehat\theta^{(k)}_{\mathrm{PI}},
\qquad x\in\mathbb R^{|\obs(k)|}.
\end{equation}
This construction applies to any observation pattern \(\obs(k)\) (including a previously unseen pattern \(\obs(K+1)\)),
provided the relevant submatrix \(\widehat\Sigma_{\obs(k)}\) is invertible (or regularized, see below).

\paragraph{Remark.}
When \(\widehat\Sigma\) is invertible, the same moments also yield a full-data linear predictor
\(\widehat\theta_{\mathrm{PI}}=\widehat\Sigma^{-1}\widehat\gamma\).
Related moment-based estimators under missingness were studied by \citet{LohWainwright12} in high-dimensional settings.

\paragraph{Illustration.}
Let \(d=4\) and suppose \(\obs(1)=\{1,3\}\) and \(\obs(2)=\{2,3,4\}\).
After aggregation, both clients receive \((\widehat\Sigma,\widehat\gamma)\).
Client \(1\) computes \(\widehat\theta^{(1)}_{\mathrm{PI}}=(\widehat\Sigma_{\obs(1)})^{-1}\widehat\gamma_{\obs(1)}\),
and client \(2\) computes \(\widehat\theta^{(2)}_{\mathrm{PI}}=(\widehat\Sigma_{\obs(2)})^{-1}\widehat\gamma_{\obs(2)}\).
Although client \(1\) never observes coordinate \(2\), the shared coordinate \(3\) allows pooled estimation of moments involving
feature \(3\), so \(\widehat\Sigma_{\obs(1)}\) and \(\widehat\gamma_{\obs(1)}\) can exploit information contributed by client \(2\),
improving the stability of the local inversion compared to using client \(1\)'s data alone.

\paragraph{Computational Benefits.} A major advantage of plug-in methods is that they support asynchronous, one-shot estimation. Unlike iterative optimization procedures (e.g., distributed gradient descent), each site only needs to broadcast its local sufficient statistics once. The aggregated moments $(\hat{\Sigma}, \hat{\gamma})$ are then redistributed, allowing each client to compute their specific $\hat{\theta}^{(k)}_{\mathrm{PI}}$ locally without further communication.

In the following, we propose federated estimators of the $(\Sigma,\gamma)$ statistics.

\subsection{Zero-Imputed Estimators}
A naive approach is to simply impute missing features with zeros. We define the zero-imputed global estimators as the average of the locally zero-filled moments:
\begin{equation*}
    \begin{cases}
    \displaystyle\hat\gamma^{\mathcal{I}^0}= \frac{1}{n}\sum_{i=1}^n  (M_i\odot X_i) Y_i= \sum_{k=1}^K \frac{n_k}{n} \hat\gamma_k^{\mathcal{I}^0} \\
    \displaystyle\hat\Sigma^{\mathcal{I}^0}= \frac{1}{n}\sum_{i=1}^n (M_i\odot X_i)(M_i\odot X_i)^\top= \sum_{k=1}^K \frac{n_k}{n}\hat\Sigma_k^{\mathcal{I}^0} ,
    \end{cases}
\end{equation*}
where $M_i \in \{0,1\}^d$ is the binary observation mask for sample $i$, and $\hat\gamma_k^{\mathcal{I}^0}$ (resp. $\hat\Sigma_k^{\mathcal{I}^0}$) are the local empirical moments computed at site $k$ with zero-imputation. While communication-efficient, this estimator is biased. The missing values (zeros) act as a multiplicative dampening factor on the moments:
\begin{equation*}
\begin{cases}
\displaystyle\E [\hat\gamma^{\mathcal{I}^0}]= \mathrm{diag}(\Pi) \odot \gamma\\
\displaystyle\E [\hat\Sigma^{\mathcal{I}^0}]= \Pi \odot \Sigma,
\end{cases}
\end{equation*}
where $\Pi = \E[MM^\top]$ is the co-observation probability matrix. Specifically, an entry $\Sigma_{lj}$ is underestimated by a factor $\Pi_{lj} \leq 1$.

\subsection{Debiased Estimators (Inverse Propensity Weighting)}
To correct for the systematic missingness, we propose the Inverse Propensity Weighting (IPW) estimators. We recover unbiased estimates by rescaling the zero-imputed moments element-wise:
\begin{equation*}\begin{cases}
    \hat\gamma^{\mathrm{debias}} = \hat\gamma^{\mathcal{I}^0} \oslash \mathrm{diag}(\Pi) \\
    \hat\Sigma^{\mathrm{debias}} = \hat\Sigma^{\mathcal{I}^0 } \oslash \Pi,
\end{cases}
\end{equation*}
where $\oslash$ denotes the element-wise division. In practice, the true probabilities $\Pi$ and $\rho_k$ are unknown. We replace them with their empirical counterparts $\hat{\Pi}_{lj} = \frac{1}{n} \sum_{i=1}^n (M_i)_l (M_i)_j$. This yields the Component-Wise (CW) estimators:
\begin{equation*}
\begin{cases}
    \hat\gamma^{\mathrm{cw}} = \hat\gamma^{\mathcal{I}^0} \oslash \mathrm{diag}(\hat \Pi) \\
    \hat\Sigma^{\mathrm{cw}} = \hat\Sigma^{\mathcal{I}^0 } \oslash \hat \Pi. 
    \end{cases}
\end{equation*}
Intuitively, $\hat\Sigma^{\mathrm{cw}}_{lj}$ is simply the empirical covariance computed using only the samples that observed both feature $l$ and feature $j$.
\begin{theorem}[Consistency]\label{thm:consistency}
Assume that for all pairs $l,j \in [d]$, the co-observation probability $\Pi_{lj} > 0$. Then, the plug-in predictor $\hat{f}_{\mathrm{PI}}$ constructed from $(\hat\Sigma^{\mathrm{cw}},\hat\gamma^{\mathrm{cw}})$ satisfies:
\begin{equation*}
    \hat\theta^{(k)}_{\mathrm{PI}} \xrightarrow[n \to \infty]{a.s.} \theta^{(k)}_\star.
\end{equation*}
Furthermore, consider a new client $K+1$ (not necessarily present in the training set). If the features observed by this client are supported by the federation (i.e., $\Pi_{lj} > 0$ for all $l,j \in \obs(K+1)$), then the local risk converges to the optimum:
\begin{equation*}
    \hat\theta^{(K+1)}_{\mathrm{PI}} \xrightarrow[n \to \infty]{a.s.} \theta^{(K+1)}_\star.
\end{equation*}
\end{theorem}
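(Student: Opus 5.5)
The argument is a strong law of large numbers applied entrywise, followed by the continuous mapping theorem applied to the map $(\Sigma,\gamma)\mapsto \Sigma_{\obs(k)}^{-1}\gamma_{\obs(k)}$. We work under \Cref{as:mcar_iid} with finite second moments of $(X,Y)$, i.e.\ $\E\|X\|^2<\infty$ and $\E[Y^2]<\infty$. These are implicitly needed for $\Sigma,\gamma,\theta_\star$ to exist, and they guarantee that $XX^\top$ and $XY$ are integrable. We also assume, as in \Cref{prop:best_linear}, that $\Sigma$ is invertible, so that every principal submatrix $\Sigma_{\obs(k)}$ is positive definite.

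\textbf{Step 1: entrywise a.s.\ convergence of the moment estimators.} Fix a pair $(l,j)$. The quantities $(M_i)_l(M_i)_j X_{il}X_{ij}$, for $i\in[n]$, are i.i.d. They are integrable because $|X_lX_j|\le \tfrac12(X_l^2+X_j^2)$. By \Cref{as:mcar_iid} and the fact that $M$ is a function of $H$, their mean is $\Pi_{lj}\Sigma_{lj}$. The SLLN therefore gives $\hat\Sigma^{\mathcal{I}^0}_{lj}\to \Pi_{lj}\Sigma_{lj}$ a.s. The same argument gives $\hat\Pi_{lj}\to\Pi_{lj}$ a.s. Since $\Pi_{lj}>0$, we have $\hat\Pi_{lj}>0$ eventually, so the ratio $\hat\Sigma^{\mathrm{cw}}_{lj}$ is eventually well defined and converges a.s.\ to $\Sigma_{lj}$. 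The identical argument for the entries $(M_i)_lX_{il}Y_i$, which are integrable by Cauchy--Schwarz, gives $\hat\gamma^{\mathrm{cw}}_l\to\gamma_l$ a.s. There are finitely many entries, so we intersect the countably many full-probability events. This yields $\hat\Sigma^{\mathrm{cw}}\to\Sigma$ and $\hat\gamma^{\mathrm{cw}}\to\gamma$ a.s.

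\textbf{Step 2: continuity of the cropped inverse.} Fix any index set $S\subset[d]$ with $\Pi_{lj}>0$ for all $l,j\in S$. Step 1 only needs the entries indexed by $S$, so it gives $\hat\Sigma^{\mathrm{cw}}_S\to\Sigma_S$ and $\hat\gamma^{\mathrm{cw}}_S\to\gamma_S$ a.s. Because $\Sigma_S\succ 0$, matrix inversion is continuous at $\Sigma_S$; equivalently, the determinant is continuous and nonzero at $\Sigma_S$. Hence $\hat\Sigma^{\mathrm{cw}}_S$ is invertible for all $n$ large enough, and $(\hat\Sigma^{\mathrm{cw}}_S)^{-1}\hat\gamma^{\mathrm{cw}}_S\to\Sigma_S^{-1}\gamma_S$ a.s. By \Cref{prop:best_linear} the limit $\Sigma_S^{-1}\gamma_S$ is $\theta_\star^{(k)}$ when $S=\obs(k)$.

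Taking $S=\obs(k)$ for $k\in[K]$ proves the first claim, since the theorem assumes $\Pi_{lj}>0$ for all pairs. Taking $S=\obs(K+1)$ proves the second claim, which needs only the weaker hypothesis that $\Pi_{lj}>0$ on $\obs(K+1)$. Note that \Cref{prop:best_linear} uses only the joint law of $(X,Y)$, not the client's frequency, so its formula applies to the unseen client as well. Convergence of the local risk then follows from $R^{(K+1)}(\theta)=\sigma^2+\ldots$, a quadratic and hence continuous function of $\theta$.

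\textbf{Main obstacle.} There is no deep obstacle; the delicate point is that the estimator is well defined. One must verify that $\hat\Pi_{lj}>0$ and that $\hat\Sigma^{\mathrm{cw}}_S$ is invertible, and both hold only eventually, on the almost-sure event. I would handle this by stating that the estimator is defined arbitrarily, for instance as $0$, when $\hat\Pi_{lj}=0$ or when the cropped matrix is singular. Since this happens only finitely often almost surely, it does not affect the a.s.\ limit. A secondary point is that $\hat\Sigma^{\mathrm{cw}}$ need not be positive semidefinite for finite $n$. This does not matter, because the argument only requires invertibility near a positive definite limit.
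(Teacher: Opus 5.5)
Your proposal is correct and follows the same route as the paper: an entrywise strong law of large numbers for the zero-imputed moments together with $\hat\Pi_{lj}=N_{lj}/n\to\Pi_{lj}$ a.s., followed by the continuous mapping theorem through $(\Sigma,\gamma)\mapsto\Sigma_{\obs(k)}^{-1}\gamma_{\obs(k)}$. Your version is more careful than the paper's two-line argument, because you explicitly check that the ratios are eventually well defined and that $\hat\Sigma^{\mathrm{cw}}_{\obs(k)}$ is eventually invertible on the almost-sure event.
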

This theorem formalizes a key advantage of the federated covariance estimation approach: identifiability relies on the union of clients, not the intersection. As long as every pair of features is observed sufficiently often across the federation, the global covariance structure can be reconstructed. This means we can instantiate the optimal local predictor for a new client $K+1$ that did not participate in training, or even one that possesses a unique observation pattern $\obs(K+1)$ never seen before, provided its feature correlations can be inferred from the aggregated global statistics.

\begin{remark}[Finite-Sample Upper Bound]
    In \Cref{app:PI}, we extend this analysis to the finite-sample regime. Considering a modified version of the Plug-in predictor, we establish a convergence rate for the excess risk $R^{(K+1)}(\hat f_{\mathrm{PI}}^{(K+1)})$ of order $O(\sqrt{d^2/n})$.
\end{remark}

\section{Impute-then-Regress (High-Dimensional Regime)}\label{sec:ITR}
In the first part, we studied predictors that extrapolate to a new client \(K\!+\!1\) in a data-rich regime \(n\gg d\). We now shift to the high-dimensional setting \(d/n\to\gamma>0\), and seek procedures that remain stable and admit finite-sample, distribution-free guarantees. To address these challenges, we adopt an Impute-then-Regress (ItR) strategy that consists of two steps:
\begin{itemize}
    \item[(I)] \textbf{Impute:} We first complete the missing entries using an exchangeable imputation method.
    \item[(R)] \textbf{Regress:} We then perform a ridge-regularized regression on the completed data.
\end{itemize}

\subsection{Exchangeable Imputations}
We define an \emph{imputation procedure} $\mathcal{I}$ as any method that fills missing values while preserving observed entries. Formally, for a client $k$ observing indices $\obs(k)$, the imputed vector $\tilde{X} = \mathcal{I}(X_{\obs(k)}, k) \in \mathbb{R}^d$ must satisfy:
\begin{equation*}
    (\tilde{X})_j = X_j, \quad \text{if } j \in \obs(k).
\end{equation*}
\begin{assumption}[Exchangeability]\label{ass:exchangeable} 
    We assume that the joint distribution of the imputed samples $(\widetilde X,\widetilde X_1, \dots, \widetilde X_{n})$ is invariant under permutation of the indices.
\end{assumption}
This assumption is standard in the missing data literature and has recently been leveraged in the context of conformal prediction \citep{zaffran2023conformal}. Popular imputation methods satisfy this assumption (e.g., zero-imputation, mean-imputation), as do many iterative procedures such as MICE \citep{van2011mice} or MissForest \citep{stekhoven2012missforest}, provided they are implemented in a permutation-invariant manner (i.e., not dependent on the specific ordering of the training data).

In the following, we denote by 
$$
R_\star(\mathcal F_{\mathcal I})
\;\coloneqq\;
\inf_{\theta\in\mathbb R^d}\E\big[(Y-\theta^\top \widetilde X)^2\big].
$$
the risk of the best linear approximation of $Y$ given $\widetilde X$. 

\paragraph{Imputation Function.}
An imputation procedure $\mathcal{I}$ produces can imputation function $\phi_\mathcal{I}$, which is a mapping that completes a partial observation vector, and can thus be applied to a new data point. For a client $k$ observing indices $\obs(k)$, the imputed vector is given by $\tilde{X} = \phi_\mathcal{I}(X_{\obs(k)}, k) \in \mathbb{R}^d$.
In practice, this function is often learned from the partial-covariate dataset $\mathcal D=\{(H_i,X_{i,\obs(H_i)})\}_{i=1}^n$, yielding a data-dependent map $\hat \phi_\mathcal{I}= \hat\phi_\mathcal{I}(\mathcal{D})$.

\paragraph{Optimal Linear Imputation.}
We introduce $\mathcal I^{\mathrm{opt}}$, the \emph{optimal linear} imputation, whose function is defined as
\begin{equation}\label{eq:optImpute}
    \phi^{\mathrm{opt}}(x,k)_{\mis(k)}
\;\coloneqq\;
\Sigma_{\mis(k),\obs(k)}\,\Sigma_{\obs(k)}^{-1}\,x.
\end{equation}
This imputation is optimal in the sense that it provides the \emph{best linear approximation} of the missing features $X_{\mis(k)}$ given the observed features $X_{\obs(k)}$ (which corresponds to the conditional expectation when $X$ is a gaussian vector). 
\begin{proposition} \label{prop:OptimalImputationApprox}
    Under \Cref{as:mcar_iid}, the optimal risk over the class of linear functions on data completed by $\mathcal{I}^{\mathrm{opt}}$ matches the global optimal linear risk:
    \begin{equation*}
        R_\star(\mathcal F_{\mathcal{I}^{\mathrm{opt}}}) = R_\star(\mathcal F_{\mathrm{lin}}).
    \end{equation*}
\end{proposition}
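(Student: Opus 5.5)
We prove the two inequalities $R_\star(\mathcal F_{\mathcal{I}^{\mathrm{opt}}}) \ge R_\star(\mathcal F_{\mathrm{lin}})$ and $R_\star(\mathcal F_{\mathcal{I}^{\mathrm{opt}}}) \le R_\star(\mathcal F_{\mathrm{lin}})$.

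\textbf{Lower bound.} On client $k$ the imputed vector is $\widetilde X = \phi^{\mathrm{opt}}(X_{\obs(k)},k)$. Its observed block is $X_{\obs(k)}$ and its missing block is $\Sigma_{\mis(k),\obs(k)}\Sigma_{\obs(k)}^{-1}X_{\obs(k)}$. Hence $\widetilde X = A_k X_{\obs(k)}$ for a fixed matrix $A_k\in\mathbb R^{d\times|\obs(k)|}$, and for any $\theta\in\mathbb R^d$,
\[
\theta^\top\widetilde X = (A_k^\top\theta)^\top X_{\obs(k)}.
\]
So a single global $\theta$ induces the client-wise linear predictor with coefficients $\theta^{(k)}=A_k^\top\theta$, which lies in $\mathcal F_{\mathrm{lin}}$. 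Conditioning on $H$ and using the decomposition $R=\sum_k\rho_k R^{(k)}$, which relies on \Cref{as:mcar_iid}, we get $\E[(Y-\theta^\top\widetilde X)^2]\ge R_\star(\mathcal F_{\mathrm{lin}})$ for every $\theta$. Taking the infimum gives the lower bound.

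\textbf{Upper bound.} Take $\theta=\theta_\star$. The induced coefficient on client $k$ is
\[
A_k^\top\theta_\star = (\theta_\star)_{\obs(k)} + \Sigma_{\obs(k)}^{-1}\Sigma_{\obs(k),\mis(k)}(\theta_\star)_{\mis(k)},
\]
using the symmetry of $\Sigma$. By the second representation in \Cref{prop:best_linear}, this equals $\theta_\star^{(k)}$. Under \Cref{as:mcar_iid}, the conditional law of $(X,Y)$ given $H=k$ is the marginal law, so the risk on client $k$ is exactly $R_\star^{(k)}(\mathcal F^{(k)}_{\mathrm{lin}})$. Summing with weights $\rho_k$ and invoking \eqref{eq:optimal_global_linear_risk} shows that $\theta_\star$ achieves $R_\star(\mathcal F_{\mathrm{lin}})$. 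Therefore the infimum is at most this value.

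\textbf{Main subtlety.} The key fact is that one global $\theta$ attains \emph{all} the client-wise optima simultaneously. This is not automatic, because the imputed class is a priori smaller than $\mathcal F_{\mathrm{lin}}$: a single $\theta$ is shared across clients, while $\mathcal F_{\mathrm{lin}}$ allows independent $\theta^{(k)}$. The choice $\theta=\theta_\star$ resolves this, since the optimal imputation maps $\theta_\star$ exactly onto each $\theta_\star^{(k)}$ through the representation in \Cref{prop:best_linear}. If $\Sigma_{\obs(k)}$ is singular, we replace inverses by pseudoinverses, as in the remark following \Cref{prop:best_linear}.
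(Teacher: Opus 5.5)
Your proof is correct and follows the paper's argument. The paper also plugs in $\theta=\theta_\star$ and notes that the optimal imputation turns it into $\theta_\star^{(k)}$ on each client, which gives $R_\star(\mathcal F_{\mathcal I^{\mathrm{opt}}})\le R_\star(\mathcal F_{\mathrm{lin}})$; it gets the reverse inequality from the fact that imputed-linear predictors are client-wise linear, and your version simply makes this explicit by writing $\widetilde X = A_k X_{\obs(k)}$ and checking $A_k^\top\theta_\star=\theta_\star^{(k)}$ via the symmetry of $\Sigma$.
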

\Cref{prop:OptimalImputationApprox} establishes that the performing an ItR procedure with the optimal linear imputation attains the optimal global linear risk defined in \Cref{eq:integratedrisk}.

Importantly, $\mathcal I^\mathrm{opt}$ is not meant to recover the true missing values (and thus distinct from the ``oracle'' imputation function), it plugs in their best linear prediction given observed features. 


\subsection{ItR with Ridge Regression}
Once covariates are imputed, all clients share a common \(d\)-dimensional representation $\widetilde X$, so ItR fits a {single} linear model on $\widetilde X$ and deploys it through composition with $ \phi_{\mathcal{I}}$.
This induces the hypothesis class
$$
\mathcal F_{\mathcal I}
\;=\;
\big\{(x,k)\mapsto \theta^\top \mathcal \phi_{\mathcal{I}}(x,k):\ \theta\in\mathbb R^d\big\}.
$$

Define the following moments of the imputed data
\[
\widehat\Sigma^{\mathcal I}=\frac1n\sum_{i=1}^n \widetilde X_i\widetilde X_i^\top,
\qquad
\widehat\gamma^{\mathcal I}=\frac1n\sum_{i=1}^n \widetilde X_iY_i.
\]
We study ridge regression on the imputed sample \((\widetilde X_i,Y_i)_{i=1}^n\). The ridge estimator is
\begin{equation}\label{eqdef:ridge_estimator}
    \widehat\theta_\lambda=(\widehat\Sigma^{\mathcal I}+\lambda I_d)^{-1}\widehat\gamma^{\mathcal I}.
\end{equation}

\paragraph{Finite-Sample Risk Bound of ItR with Ridge Regression.}
Following the analysis of \citet{mourtada2023local}, we rely on a bounded-outcome assumption to establish finite-sample guarantees for $\widehat\theta_\lambda$ that hold without assuming a specific covariate distribution or a well-specified linear model.
\begin{assumption}[Bounded outcome and covariance matrix]\label{ass:Ybound}
There exists \(M>0\) such that \(|Y|\le M\) almost surely and $\E \widetilde X \widetilde X^\top= \Sigma^{\mathcal{I}}$ exists. 
\end{assumption}
Assumption \ref{ass:Ybound} is highly general regarding the covariance structure, imposing no constraints on the distribution of $X$. While the boundedness of $Y$ is a stricter requirement than typical sub-Gaussian assumptions, it does not scale with the dimension $d$. This is a minor trade-off to secure robust, distribution-free guarantees.

To ensure the stability of the prediction and satisfy the theoretical requirements for fast rates (i.e., \Cref{ass:Ybound}), we define the final predictor using the truncation operator $T_M(t) \coloneqq \max(-M, \min(t, M))$:
\begin{equation*}
    \widehat f_\lambda(\widetilde X) \coloneqq T_M(\widehat\theta_\lambda^\top \widetilde X).
\end{equation*}

\begin{theorem}\label{thm:imputeechangeable}
Grant Assumptions~\ref{as:mcar_iid} and \ref{ass:Ybound}, and let $\mathcal I$ be any exchangeable imputation procedure  (Assumption~\ref{ass:exchangeable}).
Then, the excess risk over the best linear predictor on the imputed data satisfies for any $\lambda>0$
\[
\E\!\left[(Y-\widehat f_\lambda(\widetilde X))^2\right] - R_\star(\mathcal F_{\mathcal I})
\;\le\;
B_\lambda^{\mathcal I} \;+\; \frac{8M^2}{n}\, d_\lambda^{\mathcal I},
\]
where 
\[
B_\lambda^{\mathcal I}
\;=\;
\inf_{\theta\in\mathbb R^d}\Big\{\E[(Y-\theta^\top \widetilde X)^2]-R_\star(\mathcal F_{\mathcal I})+\lambda\|\theta\|_2^2\Big\},
\] 
is the approximation bias induced by ridge regularization, and
\[
d_\lambda^{\mathcal I}
\;=\;
\mathrm{Tr}\!\Big(\Sigma^{\mathcal I}\big(\Sigma^{\mathcal I}+\lambda I_d\big)^{-1}\Big),
\qquad
\Sigma^{\mathcal I}=\E[\widetilde X\widetilde X^\top],
\]
is the effective dimension of the covariance of the imputed covariates, also known as \emph{degrees of freedom}.
\end{theorem}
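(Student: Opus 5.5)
Our plan is to reduce the statement to the distribution-free ridge bound of \citet{mourtada2023local} applied to the imputed sample $(\widetilde X_i,Y_i)_{i=1}^n$ together with the test point $(\widetilde X,Y)$. That result gives, for truncated ridge regression with $|Y|\le M$, an excess-risk bound of the form $B_\lambda + 8M^2 d_\lambda/n$, and its proof uses no independence of the covariates. It rests only on exchangeability of the $n+1$ points $(\widetilde X,Y),(\widetilde X_1,Y_1),\dots,(\widetilde X_n,Y_n)$ through a leave-one-out / symmetrization argument. So the first step is to check that the joint law of $((\widetilde X,Y),(\widetilde X_i,Y_i)_{i\le n})$ is permutation invariant. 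Assumption~\ref{ass:exchangeable} gives this for the $\widetilde X$'s alone. We extend it to pairs by noting that the raw data $(H_i,X_i,Y_i)$ are i.i.d., and that the imputation procedure is a permutation-equivariant map of the observed data $(H_i,X_{i,\obs(H_i)})$ which leaves the $Y_i$ untouched. Hence permuting indices of the full tuple leaves its law unchanged. Assumption~\ref{as:mcar_iid} is not really needed beyond guaranteeing that $\Sigma^{\mathcal I}$ is a common population object.

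Next we would reproduce the core of the Mourtada--Rosasco argument in the imputed variables. Fix any comparator $\theta\in\mathbb R^d$. Let $\widehat\theta_\lambda^{(+)}$ be the ridge estimator computed on the augmented sample of $n+1$ points with the test point included. By exchangeability, the expected test loss of the leave-one-out predictor equals the average leave-one-out loss on the augmented sample. We then bound each loss using the ridge optimality inequality, i.e.\ a strong convexity / Sherman--Morrison identity. The loss of the leave-one-out prediction is controlled by the in-sample loss plus a term of the form $\ell_i'^2\, \widetilde X_i^\top(\widehat\Sigma^{(+)}+\lambda I)^{-1}\widetilde X_i/(n+1)$. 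Truncation at $M$ together with $|Y|\le M$ bounds the squared residual derivative by roughly $(2M)^2$. Summing the leverage scores gives $\mathrm{Tr}(\widehat\Sigma^{(+)}(\widehat\Sigma^{(+)}+\lambda I)^{-1})$. Its expectation is at most $d_\lambda^{\mathcal I}$ by concavity of $A\mapsto \mathrm{Tr}(A(A+\lambda I)^{-1})$ and Jensen, using $\E\widehat\Sigma^{(+)}=\Sigma^{\mathcal I}$, which again follows from exchangeability.

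The in-sample regularized empirical risk at the minimizer is at most its value at $\theta$. Its expectation is $\E[(Y-\theta^\top\widetilde X)^2]+\lambda\|\theta\|^2$, because each summand has the same marginal law as the test pair. Subtracting $R_\star(\mathcal F_{\mathcal I})$ and taking the infimum over $\theta$ yields $B_\lambda^{\mathcal I}$.

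The main obstacle is the leave-one-out comparison step. There we need the truncated predictor's loss, not the raw linear one, to be controlled with the constant $8M^2$. We would handle this with the standard trick. On the event $|\widehat\theta^\top\widetilde X|>M$, truncation only reduces the loss. Otherwise the residual is at most $2M$, and the ridge update difference $\widehat\theta_\lambda-\widehat\theta^{(+)}_\lambda$ is expressed via a rank-one update. The inequality $(a+b)^2\le$ loss terms then yields the factor $8M^2$ after averaging. The remaining care is to check that the exchangeability of the imputed points, which are not independent since the imputation is learned from all data, suffices at each place where independence was used in the i.i.d.\ proof. We expect it does, since only symmetry of the augmented sample is invoked.
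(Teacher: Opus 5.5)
Your proposal is correct and follows the paper's route. You establish exchangeability of the imputed pairs $(\widetilde X,Y),(\widetilde X_1,Y_1),\dots,(\widetilde X_n,Y_n)$, invoke the exchangeable-data ridge bound (\Cref{thm:jaouad}), and subtract $R_\star(\mathcal F_{\mathcal I})$ inside the infimum to obtain $B_\lambda^{\mathcal I}$. Your justification for passing from exchangeability of the $\widetilde X$'s to exchangeability of the pairs is more careful than the paper's one-line assertion: you use i.i.d.\ raw data together with a permutation-equivariant imputation map, applied to all $n+1$ points, that leaves $Y$ untouched, and the paper relies on the same reading without stating it. Your sketched re-derivation of the leave-one-out argument is not needed, since the cited theorem is already stated for exchangeable data.
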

Theorem~\ref{thm:imputeechangeable} states that the excess risk decomposes into (i) an approximation term \(B_\lambda^{\mathcal I}\), which captures the bias induced by ridge regularization, and (ii) a variance term of order \(\tfrac{1}{n} d_\lambda^{\mathcal I}\), where the effective dimension \(d_\lambda^{\mathcal I}\) depends on the covariance \(\Sigma^{\mathcal I}\) of the imputed features. In particular, \(d_\lambda^{\mathcal I}\) quantifies the degree of freedom of the imputed design: it decreases when \(\widetilde X\) becomes more correlated (i.e., more concentrated in a low-dimensional subspace), reflecting that stronger correlations reduce statistical complexity. We illustrate in \Cref{sec:Comparison} how this upper-bound varies across several imputation procedures.


\begin{remark}
    The bound remains valid for $\lambda=0$. In this unregularized case, the predictor corresponds to the truncated Ordinary Least Squares estimator. The bias term $B_0^{\mathcal I}$ vanishes, and the variance is governed by the raw dimension ratio $d/n$.
\end{remark}

\begin{remark}
    In practice, the bound parameter $M$ is unknown but can be estimated by $\hat M = \max_{i} |Y_i|$ \citep{mourtada2023local}. The theoretical guarantee provided by \Cref{thm:imputeechangeable} remains valid with $\hat M$, up to an additional negligible term of order $O(M^2/n)$.
\end{remark}

\subsection{Practical Federated Implementation}
We now address the practical implementation of the Impute-then-Regress strategy. In a federated context, the chosen imputation method must be computationally efficient and not necessitate sending individual data. 

\paragraph{Federated Imputation.}
While recent works have proposed complex generative approaches using federated GANs \citep{balelli2023fed}, we focus here on linear-compatible methods. 
We categorize them by their communication cost, ranging from zero- to multi-shot procedures.

\noindent \textit{Zero-Shot: Naive Imputation $\mathcal{I}^0$.} The simplest federated imputation strategy consists in replacing missing entries with a commonly-agreed constant, typically $0$, or the covariate means. Although this method introduces obvious bias, it cannot be ignored in a theoretical analysis as it represents the standard baseline for practitioners. Moreover, recent theoretical \citep{josse2019consistency,le2021sa,ayme2023naive,Van_Ness_2023} and empirical \citep{leimputation} studies have highlighted regimes where naive imputation yields competitive performance, particularly for high dimension. From a federated point of view, $\mathcal{I}^0$ is attractive because it entails zero communication cost.

\noindent\textit{One-Shot: Approximate $\mathcal I^\mathrm{opt}$.} To approximate the optimal linear imputation defined in \Cref{eq:optImpute} in federated learning, we can leverage the unbiased $\widehat\Sigma^{\mathrm{cw}}$ introduced in \Cref{sec:PlugIn}. 
The imputation function becomes:
\begin{equation*}
    \tilde X_{i,\mis(k)} = (\widehat\Sigma^{\mathrm{cw}})_{\mis(k),\obs(k)}\,(\widehat\Sigma^{\mathrm{cw}})_{\obs(k)}^{\dagger}\,x.
\end{equation*}
Importantly, since $\widehat\Sigma^{\mathrm{cw}}$ depends only on covariates, it can be estimated very accurately either from a large held-out unlabeled covariate dataset, or federatively via a one-shot aggregation (summing) of client-level second-moment statistics. This approach can substantially reduce the bias of naive imputation by exploiting cross-client correlations, at the cost of a single communication round of order $O(d^2)$ floats.

\noindent\textit{Multi-Shot: Federated ICE.} 
When higher accuracy of the imputation is required, the popular Imputation by Chained Equations (ICE) framework \citep{van2011mice} can be theoretically be implemented within the federated setting. Starting from an initial imputed dataset (e.g., via $\mathcal{I}^0$), the server repeatedly sums the empirical covariance of the current imputed data and broadcasts it; clients then update their missing coordinates by applying the corresponding linear transformation,
\[
\tilde X_{i,\mis(k)}^{(t+1)}
\;\coloneqq\;
(\widehat\Sigma_t)_{\mis(k),\obs(k)}\,
(\widehat\Sigma_t)_{\obs(k)}^{\dagger}\,X_{i,\obs(k)}.
\]
After $T$ rounds, the total communication cost scales as $O(Td^2)$. This approach should be used with caution in federated deployments, since repeatedly transmitting $\{\widehat\Sigma_t\}_{t\in[T]}$ can be communication-intensive and may increase privacy risks if not combined with appropriate protections (e.g., secure aggregation or differential privacy). Accordingly, we recommend this heuristic only when \(d\) is small and communication is not a bottleneck, or when such protections are in place. 


\paragraph{Federated Ridge Regression}
Once covariates have been imputed using one of the federated procedures above, the ridge estimator in \Cref{eqdef:ridge_estimator} can also be learned in a federated manner.

\noindent\textit{One-Shot Estimation.}
The closed-form estimator $\widehat\theta_\lambda$ depends only on the aggregated sufficient statistics
$\sum_{i=1}^n \widetilde X_i \widetilde X_i^\top\in\mathbb R^{d\times d}$ and $\sum_{i=1}^n \widetilde X_i Y_i\in\mathbb R^{d}$.
These can be computed exactly in a single communication round: each client $k$ forms the corresponding local sums on its imputed data and transmits them to the server, which aggregates and solves \eqref{eqdef:ridge_estimator}. This yields the {exact} ridge solution, but requires sending a $O(d(d+1))$ floats.

\noindent\textit{Multi-shot Estimation (FedAvg).}
When communicating $d\times d$ matrices is prohibitive, $\widehat\theta_\lambda$ can be obtained by minimizing the ridge objective iteratively,
\[
\widehat\theta_\lambda
=
\argmin_{\theta\in\mathbb R^d}
\Big\{\frac{1}{2n}\sum_{i=1}^n (Y_i-\theta^\top \widetilde X_i)^2+\frac{\lambda}{2}\|\theta\|_2^2\Big\}.
\]
This objective can be optimized with Federated Averaging (FedAvg) \citep{mcmahan2017communication}, which alternates local gradient steps on client datasets with global parameter averaging. This reduces the cost to $O(d)$ scalars exchanged per round, at the expense of requiring $T$ rounds. The solution converges to the exact one-shot estimator $\widehat \theta_\lambda$ as $T\to +\infty$. 
Consequently, this method is preferred for high-dimensional settings, especially as the $\ell_2$ penalization significantly accelerated the convergence \citep[see][for example]{dieuleveut2017harder}.

\section{Theoretical Comparison of Methods}\label{sec:Comparison}
In this section, we analyze the theoretical guarantees provided by \Cref{thm:imputeechangeable} for different federated imputation strategies and compare them against the baseline of local learning. We benchmark our results using standard optimality criteria from the random design linear prediction literature. Specifically, for the unregularized case ($\lambda=0$), we compare our rates to the minimax rate  for Ordinary Least Squares \citep{tsybakov2003optimal,mourtada2019exact}. For the regularized case ($\lambda>0$), we analyze the bias and effective dimension terms, which are established as tight upper bounds for Ridge regression in high-dimensional settings \citep{caponnetto2007optimal,dicker2016ridge,richards2021asymptotics, ayme2025breaking}. 
 

\subsection{Local Learning}
To quantify the benefit of cross-client collaboration, we benchmark against a {local learning} baseline in which clients do not communicate. Each client trains a predictor only from its own samples, and test points originating from this client are predicted using that local model. Formally, define the local strategy
\begin{equation*}
    \widehat f^{\mathrm{local}}_\lambda(X_{\obs(H)},H)=\sum_{k=1}^K \mathds{1}\{H=k\}\,\widehat f^{(k)}_{\lambda_k}(X_{\obs(k)}),
\end{equation*}
where $\widehat f^{(k)}_{\lambda_k}(x)=T_M\!\big(x^\top \widehat\theta^{(k)}_{\lambda_k}\big),$ and $\widehat\theta^{(k)}_{\lambda_k}=(\widehat\Sigma^{(k)}+\lambda_k I)^{-1}\widehat\gamma^{(k)}$ is computed from client $k$'s data only. We take $\lambda_k=\lambda/\rho_k$ to account for heterogeneous local sample sizes.

\begin{proposition}[Risk of local learning]\label{prop:local_predictor_gen}
\looseness=-1 Assume~\Cref{as:mcar_iid} and \ref{ass:Ybound}. For the local (non-communicating) predictor \(\widehat f^{\mathrm{loc}}_\lambda\), we have the following risk bounds:
\begin{equation*}
\begin{aligned}
    E_0 \;\le\; R(\widehat f^{\mathrm{local}}_\lambda) \;\le\;& E_0 + \frac{16M^2}{n}\sum_{k=1}^K d^{(k)}_{\lambda_k} \\
    & + \sum_{k=1}^K \rho_k\Big(R_\star^{(k)} + B^{(k)}_{\lambda_k}\Big),
\end{aligned}
\end{equation*}
with \(E_0=\sum_{k} \rho_k(1-\rho_k)^n\,\E[Y^2]\) the error from empty clients. The bias and effective dimension are defined as:
\begin{equation*}
    \begin{array}{l}
         B^{(k)}_{\lambda_k} = \inf_{\theta} \Big\{\E\big[(Y-\theta^\top X_{\obs(k)})^2\big] - R_\star^{(k)} +\lambda_k\|\theta\|_2^2\Big\} \\
    d^{(k)}_{\lambda_k} = \mathrm{Tr}\Big(\Sigma_{\obs(k)}\big(\Sigma_{\obs(k)}+\lambda_k I\big)^{-1}\Big).
    \end{array}
\end{equation*}

\end{proposition}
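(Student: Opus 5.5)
We decompose the global risk by client, $R(\widehat f^{\mathrm{local}}_\lambda)=\sum_k \rho_k R^{(k)}(\widehat f^{(k)}_{\lambda_k})$, and condition on the random local sample size $N_k=\sum_i \ind\{H_i=k\}\sim\mathrm{Bin}(n,\rho_k)$. By \Cref{as:mcar_iid}, conditionally on $H_1,\dots,H_n$, the samples with $H_i=k$ are i.i.d.\ copies of $(X_{\obs(k)},Y)$ under the unconditional law of $(X,Y)$, and the test point given $H=k$ has the same law. Hence, conditionally on $N_k=m\ge 1$, client $k$ runs ordinary truncated ridge on $m$ i.i.d.\ samples with features $X_{\obs(k)}$.

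We then apply \Cref{thm:imputeechangeable} in this degenerate setting. We take $K=1$, the feature vector $X_{\obs(k)}$, the trivial (identity) imputation, which is exchangeable, and $m$ samples. This gives
\[
\E\big[R^{(k)}(\widehat f^{(k)}_{\lambda_k})\mid N_k=m\big]\le R_\star^{(k)}+B^{(k)}_{\lambda_k}+\tfrac{8M^2}{m}d^{(k)}_{\lambda_k}.
\]
On the event $N_k=0$, we adopt the convention that the predictor is $0$ (the empty ridge solution), so the risk equals $\E[Y^2]$. Summing gives
\[
\E R^{(k)}\le(1-\rho_k)^n\E[Y^2]+\Pb(N_k\ge1)\big(R_\star^{(k)}+B^{(k)}_{\lambda_k}\big)+8M^2d^{(k)}_{\lambda_k}\,\E\big[\tfrac{\ind\{N_k\ge1\}}{N_k}\big].
\]
We bound $\Pb(N_k\ge 1)\le 1$ and use the standard binomial inequality $\E[\ind\{N\ge1\}/N]\le 2/((n+1)\rho_k)\le 2/(n\rho_k)$. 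The latter follows from $1/N\le 2/(N+1)$ together with the identity $\E[1/(N+1)]=(1-(1-\rho)^{n+1})/((n+1)\rho)$. Multiplying by $\rho_k$ and summing over $k$ yields the upper bound with the constant $16M^2/n$, the $\rho_k$ cancelling.

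One subtlety concerns the regularization. \Cref{thm:imputeechangeable} is stated for a fixed $\lambda$, and here $\lambda_k=\lambda/\rho_k$ is deterministic, so it applies directly with the ridge matrix $\widehat\Sigma^{(k)}$ normalized by $N_k$. The bias and effective dimension are population quantities, so they do not depend on $m$.

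For the lower bound, we keep only the empty-client term and use nonnegativity of the other terms. On $\{N_k=0\}$ the predictor is $0$, independent of the test point, so the contribution is exactly $\rho_k(1-\rho_k)^n\E[Y^2]$. On the complement the squared loss is $\ge0$. Summing over $k$ gives $E_0$.

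The main obstacle is the conditioning step: we need to justify that, given the client labels, the local sample is i.i.d.\ from the same law as the test distribution. This is exactly where MCAR is used. We also need the clean binomial bound on $\E[\ind\{N_k\ge1\}/N_k]$, which must yield the factor $2$ and hence the constant $16$.
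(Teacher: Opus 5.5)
Your proposal is correct and follows the paper's proof essentially step by step. You decompose the global risk over clients, condition on the binomial count $N_k$, and apply the exchangeable truncated-ridge bound with the convention $\widehat f^{(k)}=0$ on $\{N_k=0\}$; the paper cites \Cref{thm:jaouad} directly, and your identity-imputation instance of \Cref{thm:imputeechangeable} reduces to that theorem. You then integrate using $\E[\ind\{N_k\ge1\}/N_k]\le 2/(n\rho_k)$. Your derivation of this binomial bound via $\E[1/(N+1)]$, and your extra factor $\Pb(N_k\ge 1)$ on the $R_\star^{(k)}+B^{(k)}_{\lambda_k}$ term, are only harmless refinements of the same argument.
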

This bound highlights the fragmentation cost of local learning. First, when many clients have small \(\rho_k\), the empty-client term \(E_0\) may be non-negligible, reflecting that some sites effectively cannot train. Second, the estimation term scales with \(\sum_k d^{(k)}_{\lambda_k}\), i.e., the sum of local effective dimensions, rather than a pooled effective dimension; this captures the lack of statistical sharing across sites.

\subsection{Optimal Linear Imputation}

\begin{theorem}\label{thm:ImputOracleDB}
Under assumptions of \Cref{thm:imputeechangeable}, the imputation $I^{\mathrm{opt}}$ satisfies 
\begin{equation*}
    \begin{array}{l}
         d_\lambda^{\mathcal I^{\mathrm{opt}}}\le B_\lambda:=\mathrm{Tr}\!\big(\Sigma(\Sigma+\lambda I_d)^{-1}\big),\\
        B_\lambda^{\mathcal I^{\mathrm{opt}}}\le d_\lambda:= \inf_{\theta\in\mathbb R^d}\Big\{\|\theta-\theta_\star\|_\Sigma^2+\lambda\|\theta\|_2^2\Big\}.         
    \end{array}
\end{equation*}

Consequently, the risk of the associated ItR predictor is
\begin{align*}
        R(\hat f_\lambda^{\mathcal{I}^{\mathrm{opt}}})\leq R_\star(\mathcal F_\mathrm{lin})+ B_\lambda+ 8M^2\frac{d_\lambda}{n}. 
\end{align*}

\end{theorem}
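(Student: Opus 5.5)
The plan is to make $\Sigma^{\mathcal I^{\mathrm{opt}}}$ explicit, compare it with $\Sigma$ in the Loewner order, and compute the population risk of any $\theta$ on the imputed data in closed form. Both inequalities then follow directly, and the final risk bound follows by plugging them into \Cref{thm:imputeechangeable}. That theorem applies because $\phi^{\mathrm{opt}}$, built from the population $\Sigma$, is a fixed deterministic map applied sample-wise to i.i.d.\ data, so \Cref{ass:exchangeable} holds trivially. Note that in the statement the names are swapped relative to the text of \Cref{thm:imputeechangeable}: the first inequality concerns the effective dimension and the second the bias. I prove exactly the two displayed inequalities and then combine them.

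\textbf{Step 1 (structure of the imputation).} For each $k$, define the linear map $P_k:\mathbb R^d\to\mathbb R^d$ by
\[
(P_kx)_{\obs(k)}=x_{\obs(k)},\qquad (P_kx)_{\mis(k)}=\Sigma_{\mis(k),\obs(k)}\Sigma_{\obs(k)}^{-1}x_{\obs(k)}.
\]
Then $\widetilde X=P_HX$. Coordinatewise, $P_kX$ is the $L_2$ projection of $X$ onto the linear span of $X_{\obs(k)}$. Hence the residual $r_k=X-P_kX$ satisfies $\E[r_k X_{\obs(k)}^\top]=0$, and therefore $\E[r_k(P_kX)^\top]=0$. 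Its second moment is the Schur complement $V_k$, placed on the $\mis(k)$ block. Using \Cref{as:mcar_iid},
\[
\Sigma^{\mathcal I^{\mathrm{opt}}}=\sum_k\rho_k P_k\Sigma P_k^\top,
\qquad
\Sigma=P_k\Sigma P_k^\top+\E[r_kr_k^\top]\succeq P_k\Sigma P_k^\top.
\]
Averaging over $k$ gives $\Sigma^{\mathcal I^{\mathrm{opt}}}\preceq\Sigma$.

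\textbf{Step 2 (effective dimension).} Write $A(A+\lambda I)^{-1}=I-\lambda(A+\lambda I)^{-1}$. Since $A\mapsto (A+\lambda I)^{-1}$ is operator antitone on positive semidefinite matrices, $A\mapsto \mathrm{Tr}\big(A(A+\lambda I)^{-1}\big)$ is monotone in the Loewner order. Step 1 then gives $d_\lambda^{\mathcal I^{\mathrm{opt}}}\le \mathrm{Tr}\big(\Sigma(\Sigma+\lambda I)^{-1}\big)$.

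\textbf{Step 3 (bias).} Condition on $H=k$ and write $Y=X^\top\theta_\star+\varepsilon$, so that
\[
Y-\theta^\top P_kX=(\theta_\star-\theta)^\top P_kX+\theta_\star^\top r_k+\varepsilon .
\]
The cross terms vanish: $\E[\varepsilon X]=0$ kills the terms involving $\varepsilon$, because both $P_kX$ and $r_k$ are linear in $X$, and Step 1 kills the $P_kX$–$r_k$ term. Therefore
\[
\E\big[(Y-\theta^\top\widetilde X)^2\mid H=k\big]=\|\theta-\theta_\star\|^2_{P_k\Sigma P_k^\top}+\sigma^2+\|(\theta_\star)_{\mis(k)}\|^2_{V_k}.
\]
By \Cref{prop:best_linear}, the last two terms equal $R_\star^{(k)}$. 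Averaging over $k$ gives
\[
\E\big[(Y-\theta^\top\widetilde X)^2\big]=R_\star(\mathcal F_{\mathrm{lin}})+\|\theta-\theta_\star\|^2_{\Sigma^{\mathcal I^{\mathrm{opt}}}}.
\]
This identity reproves \Cref{prop:OptimalImputationApprox}, namely $R_\star(\mathcal F_{\mathcal I^{\mathrm{opt}}})=R_\star(\mathcal F_{\mathrm{lin}})$, since the excess term is nonnegative and vanishes at $\theta=\theta_\star$. Using $\Sigma^{\mathcal I^{\mathrm{opt}}}\preceq\Sigma$ and then taking the infimum over $\theta$ of $\|\theta-\theta_\star\|_\Sigma^2+\lambda\|\theta\|_2^2$ yields the bias bound.

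\textbf{Step 4 (conclusion and main difficulty).} Insert Steps 2 and 3 into \Cref{thm:imputeechangeable} and replace $R_\star(\mathcal F_{\mathcal I^{\mathrm{opt}}})$ by $R_\star(\mathcal F_{\mathrm{lin}})$. This gives the risk bound, with the bias term bounded by the infimum quantity and the variance term $8M^2/n$ times the trace quantity. The only delicate points are the two orthogonality facts in Step 3, which hold without any model assumption because they use only $L_2$ projections, and the operator monotonicity in Step 2. If $\Sigma_{\obs(k)}$ is singular, I would use pseudoinverses; the projection argument is unchanged.
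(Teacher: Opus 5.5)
Your proof is correct and follows essentially the paper's route: the Schur-complement comparison $\Sigma^{\mathcal I^{\mathrm{opt}}}=\sum_k\rho_k P_k\Sigma P_k^\top\preceq\Sigma$, monotonicity of $A\mapsto\mathrm{Tr}(A(A+\lambda I)^{-1})$, and the bias bound via $\theta_\star$ being the minimizer on the imputed data. The only difference is that you derive the identity $\E[(Y-\theta^\top\widetilde X)^2]=R_\star(\mathcal F_{\mathrm{lin}})+\|\theta-\theta_\star\|^2_{\Sigma^{\mathcal I^{\mathrm{opt}}}}$ directly rather than citing \Cref{prop:OptimalImputationApprox}, and you correctly read the final bound as the infimum (bias) plus $8M^2/n$ times the trace (effective dimension), which is what the paper's proof actually establishes despite the swapped names in the statement.
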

\Cref{thm:ImputOracleDB} establishes that the ItR with optimal linear imputation converges to the global optimal risk \(R_\star(\mathcal F_{\mathrm{lin}})\) at a ``fast rate''. Moreover, these rates scale with the {total sample size} $n = \sum_{k} n_k$, rather than local sample sizes, thus confirming the benefit of collaborative learning.
This spectral bound is particularly relevant in our federated context as it captures the intrinsic dimensionality of the \emph{true } data. When features are highly correlated (e.g., due to aliasing across clients), $d_\lambda \ll d$, resulting in tighter bounds. Effectively, $d_\lambda$ quantifies the number of directions learnable by the ridge procedure, serving as a proxy for how well missing features can be reconstructed from observed ones.



\subsection{Naive Imputation}
We now compare local learning to the federated ridge predictor built on zero-imputed covariates, denoted \(\widehat f^{\mathcal I^0}_\lambda\). 
\begin{proposition}\label{prop:0vsLocal}
Under assumptions of \Cref{thm:imputeechangeable}, 
\[
B_\lambda^{\mathcal I^0}\;\ge\;\sum_{k=1}^K \rho_k B^{(k)}_{\lambda_k},
\qquad
d_\lambda^{\mathcal I^0}\;\le\;\sum_{k=1}^K d^{(k)}_{\lambda_k}.
\]
\end{proposition}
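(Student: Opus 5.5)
The plan is to write everything for zero imputation in terms of the client-wise masks, and then compare term by term with the local quantities of \Cref{prop:local_predictor_gen}. Let $D_k=\mathrm{diag}(\ind_{\{j\in\obs(k)\}})_{j\in[d]}$. Zero imputation gives $\widetilde X=D_H X$. Under \Cref{as:mcar_iid}, $H\indep(X,Y)$, so
\[
\Sigma^{\mathcal I^0}=\sum_{k=1}^K\rho_k D_k\Sigma D_k,
\qquad
\E[(Y-\theta^\top\widetilde X)^2]=\sum_{k=1}^K\rho_k\,\E\big[(Y-\theta_{\obs(k)}^\top X_{\obs(k)})^2\big].
\]
Both inequalities are then obtained from this decomposition.

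\textbf{Effective dimension.} Set $A_k=\rho_k D_k\Sigma D_k\succeq 0$. Up to zero padding, $A_k$ is $\rho_k\Sigma_{\obs(k)}$. Hence
\[
\mathrm{Tr}\big(A_k(A_k+\lambda I_d)^{-1}\big)=\mathrm{Tr}\big(\Sigma_{\obs(k)}(\Sigma_{\obs(k)}+\lambda/\rho_k I)^{-1}\big)=d^{(k)}_{\lambda_k},
\]
using $\lambda_k=\lambda/\rho_k$ and the fact that the padded block contributes zero to the trace. The claim then follows from subadditivity of $A\mapsto\mathrm{Tr}(A(A+\lambda I)^{-1})$ on PSD matrices. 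To see it, write
\[
\mathrm{Tr}\Big(\big(\textstyle\sum_l A_l\big)\big(\sum_l A_l+\lambda I\big)^{-1}\Big)=\sum_k \mathrm{Tr}\Big(A_k^{1/2}\big(\textstyle\sum_l A_l+\lambda I\big)^{-1}A_k^{1/2}\Big).
\]
Since $\sum_l A_l+\lambda I\succeq A_k+\lambda I$, operator monotonicity of inversion gives $(\sum_l A_l+\lambda I)^{-1}\preceq(A_k+\lambda I)^{-1}$. Conjugating by $A_k^{1/2}$ and taking traces bounds each summand by $d^{(k)}_{\lambda_k}$.

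\textbf{Bias.} For a fixed $\theta\in\mathbb R^d$, the ridge objective of the pooled problem is
\[
\sum_k\rho_k\,\E\big[(Y-\theta_{\obs(k)}^\top X_{\obs(k)})^2\big]+\lambda\|\theta\|_2^2 .
\]
I will then do two things.
\begin{enumerate}
\item Lower bound this objective by $\sum_k\rho_k\big\{\E[(Y-\theta_{\obs(k)}^\top X_{\obs(k)})^2]+\lambda_k\|\theta_{\obs(k)}\|_2^2\big\}$. Each bracket is at least $R_\star^{(k)}+B^{(k)}_{\lambda_k}$ by definition of $B^{(k)}_{\lambda_k}$, applied with the feasible local vector $\theta_{\obs(k)}$. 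Taking the infimum over $\theta$ then gives $B^{\mathcal I^0}_\lambda+R_\star(\mathcal F_{\mathcal I^0})\ge\sum_k\rho_k(R^{(k)}_\star+B^{(k)}_{\lambda_k})$.
\item Compare $R_\star(\mathcal F_{\mathcal I^0})$ with $\sum_k\rho_kR^{(k)}_\star=R_\star(\mathcal F_{\mathrm{lin}})$. Since $\mathcal F_{\mathcal I^0}\subset\mathcal F_{\mathrm{lin}}$, we have $R_\star(\mathcal F_{\mathcal I^0})\ge R_\star(\mathcal F_{\mathrm{lin}})$.
\end{enumerate}

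\textbf{Main obstacle.} Both reductions in the bias step are delicate when the $\obs(k)$ overlap. First, the penalty identity $\lambda\sum_k\rho_k\lambda_k\rho_k^{-1}\|\theta_{\obs(k)}\|^2/\lambda=\sum_k\|\theta_{\obs(k)}\|_2^2$ equals $\lambda^{-1}\cdot\lambda\|\theta\|_2^2$ only when the observation sets are disjoint. With overlaps it exceeds $\|\theta\|_2^2$, which is the wrong direction for step 1. Second, the term $-R_\star(\mathcal F_{\mathcal I^0})$ also goes the wrong way for a lower bound in step 2.

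I would therefore first prove the clean statement when $\{\obs(k)\}$ is a partition of $[d]$. In that case the pooled objective separates exactly over the blocks $\theta_{\obs(k)}$, $R_\star(\mathcal F_{\mathcal I^0})=R_\star(\mathcal F_{\mathrm{lin}})$, and the bias inequality holds with equality. I would then examine whether the general case only needs the summed form $B^{\mathcal I^0}_\lambda+R_\star(\mathcal F_{\mathcal I^0})\ge\sum_k\rho_k(R_\star^{(k)}+B^{(k)}_{\lambda_k})$, which is what the comparison with \Cref{prop:local_predictor_gen} actually uses. If so, I would try to recover that summed bound by redistributing the penalty on shared coordinates, weighting each coordinate by the clients that observe it. If that redistribution fails, I would fall back on the partition assumption for the bias inequality, noting that overlapping patterns require a modified local regularization.
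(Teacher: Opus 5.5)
Your effective-dimension argument is the same as the paper's. You decompose $\Sigma^{\mathcal I^0}=\sum_k\rho_kD_k\Sigma D_k$, use $(\Sigma^{\mathcal I^0}+\lambda I_d)^{-1}\preceq(\rho_kD_k\Sigma D_k+\lambda I_d)^{-1}$ inside each trace, and read off $d^{(k)}_{\lambda_k}$ from the zero-padded block. This part is correct for arbitrary, possibly overlapping, patterns.

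For the bias, your suspicion is justified and can be settled. The inequality is false once observation sets overlap, so you should drop the redistribution attempt. Take $d=1$, $K=2$, $\obs(1)=\obs(2)=\{1\}$, $\rho_1=\rho_2=1/2$, $s=\E[X^2]>0$ and $\theta_\star\neq 0$ (e.g.\ $X$ uniform on $\{\pm1\}$, $Y=\theta_\star X$). Then $\widetilde X=X$, $\lambda_k=2\lambda$, and
\[
B_\lambda^{\mathcal I^0}=\frac{s\lambda}{s+\lambda}\,\theta_\star^2\;<\;\frac{2s\lambda}{s+2\lambda}\,\theta_\star^2=\sum_{k=1}^{2}\rho_k B^{(k)}_{\lambda_k}.
\]
Here $R_\star(\mathcal F_{\mathcal I^0})=R_\star(\mathcal F_{\mathrm{lin}})$, so the summed form $B^{\mathcal I^0}_\lambda+R_\star(\mathcal F_{\mathcal I^0})\ge\sum_k\rho_k(R^{(k)}_\star+B^{(k)}_{\lambda_k})$ fails too.

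The paper's proof has exactly the two defects you flagged.
\begin{itemize}
\item It writes $\lambda\|\theta\|_2^2=\sum_k\rho_k(\lambda/\rho_k)\|\theta\|_2^2$. This is correct only when read as $\sum_k\rho_k\lambda_k\|\theta_{\obs(k)}\|_2^2$ with disjoint $\obs(k)$. With overlaps that quantity equals $\lambda\sum_k\|\theta_{\obs(k)}\|_2^2\ge\lambda\|\theta\|_2^2$, which is the wrong direction. (The identity you display for this is garbled; this is the intended one.)
\item It stops at the summed form without the needed $R_\star(\mathcal F_{\mathcal I^0})\le R_\star(\mathcal F_{\mathrm{lin}})$. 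That holds only for a partition, where $\mathcal F_{\mathcal I^0}=\mathcal F_{\mathrm{lin}}$.
\end{itemize}
So your partition-case argument, which gives the equality $B^{\mathcal I^0}_\lambda=\sum_k\rho_kB^{(k)}_{\lambda_k}$, is the correct content of the bias claim. The hypothesis that $\{\obs(k)\}$ partitions $[d]$ should be stated explicitly.

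If you want something for overlapping patterns, the redistribution that works is coordinatewise rather than clientwise. Since $\sum_{k:\,j\in\obs(k)}\rho_k=\Pi_{jj}$, one has $\lambda\|\theta\|_2^2=\sum_k\rho_k\sum_{j\in\obs(k)}(\lambda/\Pi_{jj})\theta_j^2$. This gives the summed inequality with local penalty $\lambda/\Pi_{jj}$ on coordinate $j$; note $\lambda/\Pi_{jj}\le\lambda/\rho_k$ for $j\in\obs(k)$. That is the ``modified local regularization'' you anticipated, but it is not the statement with $\lambda_k=\lambda/\rho_k$.
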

This proposition highlights the bias-variance dilemma between the two strategies. Zero-imputation typically increases bias relative to locally tuned models, but it can substantially reduce the variance term by replacing the sum of local effective dimensions with a single (often smaller) global effective dimension. This effect is striking for the OLS, where the variance differs by a factor $K$: local learning effectively estimates $Kd$ parameters, against $d$ parameters for the zero-imputed ItR ridge regression. Consequently, local learning is preferable when each site has large amounts of data (so \(E_0\approx0\) and \(\sum_k d^{(k)}_{\lambda_k}/n\) is moderate), whereas zero-imputation becomes competitive in highly fragmented regimes where variance dominates. In the next subsection, we quantify the imputation-induced bias to delineate precisely when each strategy should be preferred as a function of the dimension and missingness ratio.


\subsection{Typical-Case Analysis}\label{sec:typical}
The approximation terms in our bounds can depend on the detailed overlap structure \(\{\obs(k)\}_{k=1}^K\), which is difficult to interpret in full generality. We therefore analyze a {typical-case} model where the observation patterns are randomized.

\paragraph{Bernoulli observation model.}
We assume that prior to training, each client-feature pair is observed independently with probability $\tau\in(0,1]$:
\begin{equation}\label{eq:bernoulli_model}
    \mathds{1}\{j\in\obs(k)\}\sim\mathrm{Bernoulli}(\tau),    
\end{equation} with $k\in[K]$ and $j\in[d]$. Thus, $\tau$ quantifies the average data density, and $\tau=1$ corresponds to complete data.

\begin{proposition}\label{prop:typical_case}
Assume the features are normalized so that \(\mathrm{diag}(\Sigma)=I_d\). Under \eqref{eq:bernoulli_model}, for any penalization \(\lambda\ge0\), the bias of the zero-imputed ridge predictor satisfies
\begin{align*}
    \esp R_\star(\F_{\mathrm{lin}})
    &\leq \esp \big[ R_\star(\F^{\mathcal{I}^0}) + B_\lambda^{\mathcal{I}^0} \big] \\
    &\leq \sigma^2 + \inf_{\theta\in\R^d} \Big\{ \Vert \theta-\theta_\star\Vert_{\Sigma}^2 + \lambda' \Vert\theta\Vert_2^2 \Big\},
\end{align*}
where the expectation is over the random draw of \(\{\obs(k)\}_{k=1}^K\) and
\(
\lambda'
\;=\;
\frac{\lambda}{\tau^2}+\frac{1-\tau}{\tau}.
\)
\end{proposition}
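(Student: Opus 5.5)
The plan is to prove the two inequalities separately. The first follows from a class-inclusion argument. The second follows by plugging a well-chosen rescaled parameter into the ridge objective and averaging over the random observation patterns.

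\emph{Lower bound.} Fix a realization of $\{\obs(k)\}_{k=1}^K$. For any $\theta\in\R^d$, the zero-imputed predictor $(x,k)\mapsto\theta^\top\phi_{\mathcal I^0}(x,k)=x^\top\theta_{\obs(k)}$ is site-wise linear, so it belongs to $\F_{\mathrm{lin}}$. Hence $R_\star(\F^{\mathcal I^0})\ge R_\star(\F_{\mathrm{lin}})$. Moreover $B_\lambda^{\mathcal I^0}\ge 0$, because each term in its infimum is a nonnegative excess risk plus a nonnegative penalty. Taking expectation over the patterns gives the first inequality.

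\emph{Upper bound.} By definition,
\[
R_\star(\F^{\mathcal I^0})+B_\lambda^{\mathcal I^0}=\inf_{u\in\R^d}\Big\{\E\big[(Y-u^\top(M\odot X))^2\big]+\lambda\|u\|_2^2\Big\},
\]
where the inner expectation is conditional on the patterns. Since $\E[\inf]\le\inf\E$, it suffices to fix $u$ and compute the pattern-averaged risk. Under \eqref{eq:bernoulli_model}, after averaging over the draw of the patterns, the test mask $M=(\ind\{j\in\obs(H)\})_j$ has i.i.d.\ Bernoulli$(\tau)$ entries, whatever the value of $H$. By \Cref{as:mcar_iid}, $M$ is independent of $(X,Y)$. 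Hence $\E[M\odot u]=\tau u$ and $\mathrm{Cov}(M\odot u)=\tau(1-\tau)\,\mathrm{diag}(u\odot u)$. Conditioning on $(X,Y)$ and using a bias--variance split over $M$ gives
\[
\E\big[(Y-u^\top(M\odot X))^2\big]=\E\big[(Y-\tau u^\top X)^2\big]+\tau(1-\tau)\sum_j u_j^2\,\Sigma_{jj}.
\]
Writing $Y=X^\top\theta_\star+\varepsilon$ with $\E[\varepsilon X]=0$, the first term equals $\sigma^2+\|\tau u-\theta_\star\|_\Sigma^2$. Since $\mathrm{diag}(\Sigma)=I_d$, the second term equals $\tau(1-\tau)\|u\|_2^2$.

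Now reparametrize with $\theta=\tau u$. The averaged objective becomes
\[
\sigma^2+\|\theta-\theta_\star\|_\Sigma^2+\Big(\frac{1-\tau}{\tau}+\frac{\lambda}{\tau^2}\Big)\|\theta\|_2^2 .
\]
Taking the infimum over $\theta$ yields the claim with $\lambda'=\lambda/\tau^2+(1-\tau)/\tau$.

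\emph{Main point of care.} The delicate step is justifying that, after averaging over the random patterns, the test mask has i.i.d.\ Bernoulli$(\tau)$ coordinates independent of $(X,Y)$. This uses that each client's pattern is drawn i.i.d.\ and independently of $H$ and the data. The inequality $\E\inf\le\inf\E$ is what allows the inner infimum to be taken after this averaging.
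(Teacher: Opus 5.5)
Your proposal is correct and follows essentially the paper's argument. You average the zero-imputed ridge objective over the Bernoulli patterns with a bias--variance split of the mask (the paper does this client by client via Fubini, you via the mixture mask $M$). You then use $\mathrm{diag}(\Sigma)=I_d$ to get the extra $\tau(1-\tau)\|u\|_2^2$ term, rescale $\theta=\tau u$ to obtain $\lambda'$, and deduce the lower bound from $R_\star(\F_{\mathrm{lin}})\le R_\star(\F^{\mathcal{I}^0})$. The only cosmetic difference is that the paper exchanges expectation and infimum by plugging in the deterministic minimizer $\theta'$, whereas you invoke $\E[\inf]\le\inf\E$ directly; the two are equivalent.
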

This states that the approximation error $\esp R_\star(\F_{\mathrm{lin}})$ and the bias induced by zero-imputation are bounded by the classical ridge bias of the complete data, controlled by the missingness rate $\lambda_0:=\frac{1-\tau}{\tau}$, and is thus negligible as soon $d\gg n\lambda_0$. This extends the results of \citet{ayme2023naive} where the missing values are blockwise.

\paragraph{Takeaway:}
This analysis allows us to settle the comparison between local learning and federated imputation. In high-dimensional settings, the ItR estimator with any imputation (whether Optimal or Naive) systematically outperforms local learning.
As shown in \Cref{prop:typical_case}, the bias induced by naive imputation is controlled and acts as a stabilizer. Conversely, \Cref{prop:0vsLocal} showed that imputation drastically reduces the estimation variance. Therefore, the optimal strategy is to accept the controlled bias of imputation to gain the benefit of a single global effective dimension, rather than suffering the exploding variance inherent to fragmented local learning.


\section*{Discussion}
We have established a rigorous framework for performing federated linear regression under covariate mismatch, identifying two distinct optimal regimes. In low-dimensional settings, the Plug-in estimator is preferred for its ability to generalize to new clients with unseen feature patterns. Conversely, in high-dimensional or fragmented settings, the Impute-then-Ridge regress strategy strictly outperforms local learning. By pooling data to stabilize the global effective dimension, it achieves optimal estimation rates that isolated local training cannot match.

We are pessimistic about relaxing \Cref{as:mcar_iid} beyond local learning. In such regimes, local training appears to be the only approach that robustly targets the oracle linear risk, and provable gains from collaboration likely require substantially stronger assumptions.

A promising avenue is to extend this framework beyond linear predictors. Because our guarantees depend on \emph{effective} rather than nominal dimension, they naturally suggest using linear embeddings (e.g., Random Fourier Features \citep{rahimi2007random}) to obtain controlled, over-parameterized non-linear models—retaining variance control through federation while broadening expressivity.

\section*{Impact Statement}

This paper presents work whose goal is to advance the field of Theory of Machine
Learning. There are many potential societal consequences of our work, none
which we feel must be specifically highlighted here.

\nocite{langley00}

\bibliography{example_paper}
\bibliographystyle{icml2026}

\newpage
\appendix
\onecolumn

\section{Optimal risk over the the linear class}
The initial decomposition is given by the tower rules: 
\begin{align*}
    R(f) &= \E \left[ \left( Y - f^{(H)}([X]_{\obs(H)}) \right)^2 \right] \\
    &= \E \esp \left[ \left( Y - f^{(H)}([X]_{\obs(H)}) \right)^2 |H\right]\\
    &=\sum_{k=1}^K \rho_k \esp \left[ \left( Y - f^{(H)}([X]_{\obs(H)}) \right)^2 |H\right]\\
    &= \sum_{k=1}^K \rho_k R^{(k)}(f^{(k)}).
\end{align*}
When \Cref{as:mcar_iid} holds we can remove the conditional dependence in $H$: 
\begin{equation*}
    R^{(k)}(f^{(k)})= \esp \left[ \left( Y - f^{(H)}([X]_{\obs(H)}) \right)^2 \right].
\end{equation*}

\begin{proof}[Proof of \Cref{prop:best_linear}]
We have the complete model that satisfies $Y=\theta^\top X+\epsilon$ with $\esp[\epsilon X]=0$, then denoting $S_k:= \Sigma_{\mis(k),\obs(k)}(\Sigma_{\obs(k)})^{-1}$, we have 
\begin{align*}
    Y&=\theta_{\obs(k)}^\top X_{\obs(k)}+ \theta_{\mis(k)}^\top (X_{\mis(k)}-S_kX_{\obs(k)}+S_kX_{\obs(k)})+\epsilon\\
    &=(\theta_{\obs(k)}+S_k^\top \theta_{\mis(k)})  ^\top X_{\obs(k)}+ \theta_{\mis(k)}^\top (X_{\mis(k)}-S_kX_{\obs(k)})+\epsilon.
\end{align*}
Denoting by $\epsilon'=\theta_{\mis(k)}^\top (X_{\mis(k)}-S_kX_{\obs(k)})+\epsilon $, we have 
\begin{align*}
    \esp [\epsilon' X_{\obs(k)}]&= \esp[ X_{\obs(k)} (X_{\mis(k)}-S_kX_{\obs(k)})^\top \theta_{\mis(k)}]+ 0\\
    &= (\Sigma_{\obs(k),\mis(k)}-\Sigma_{\obs(k)}^{-1}S_k^\top)\theta_{\mis(k)}\\
    &= 0.
\end{align*}
That proof that $\theta_{\obs(k)}+S_k^\top \theta_{\mis(k)}$ is the best linear of $Y$ given $X_{\obs(k)}$ because the associated residual satisfies the first order condition $\esp[ \epsilon'X_{\obs(k)}]=0$ (see \Cref{lem:regressionLineaire}). 

Furthermore, 
\begin{align*}
    \esp [\epsilon'^{2}]&= \sigma^2+ \esp[\theta_{\mis(k)}^\top (X_{\mis(k)}-S_kX_{\obs(k)}) (X_{\mis(k)}-S_kX_{\obs(k)})^\top\theta_{\mis(k)}]\\
    &=  \sigma^2+ \theta_{\mis(k)}^\top \esp[ (X_{\mis(k)}-S_kX_{\obs(k)}) (X_{\mis(k)}-S_kX_{\obs(k)})^\top]\theta_{\mis(k)}\\
    &= \sigma^2+ \theta_{\mis(k)}^\top (\Sigma_{\mis(k)}-\Sigma_{\mis(k),\obs(k)}\Sigma_{\obs(k)}^{-1}\Sigma_{\obs(k),\mis(k)})\theta_{\mis(k)}
\end{align*}
Then $\esp [\epsilon'^{2}]= \Vert\theta_{\mis(k)}\Vert_{\Sigma_{\mis(k),\obs(k)}\Sigma_{\obs(k)}^{-1}\Sigma_{\obs(k),\mis(k)}}^2$, that complete the proof.

\end{proof}

\section{Proofs of \Cref{sec:PlugIn}}\label{app:PI}

\begin{lemma}[Linear prediction tools]\label{lem:regressionLineaire}
    Under model $Y=\theta_\star^\top X +\varepsilon$ (well specified or not), we have 
    \begin{itemize}
        \item Best linear predictor: $\theta_\star= (\esp XX^\top)^{-1}\esp[XY]=\Sigma^{-1}\gamma$.
        \item Noise first order condition: $\esp [\epsilon X ]=0$
        \item Comparison with the best linear model: 
        \begin{equation*}
            R(\theta)-R(\theta_\star)= \Vert \theta-\theta_\star\Vert_\Sigma^2
        \end{equation*}
        \item Considering an unbiased estimator $(\hat\Sigma,\hat \gamma)$ and assuming $\Vert \theta_\star\Vert\leq L$ then the predictor 
        \begin{equation*}
            \hat \theta \in \arg \min_{\Vert \hat \theta\Vert\leq L}\{\Vert \theta\Vert_{\hat \Sigma}^2-2 \hat \gamma^\top \theta\},
        \end{equation*}
        satisfies 
            \begin{equation*}
                \esp R(\hat \theta)- R( \theta_\star)\leq L^2 \esp \Vert\Sigma-\hat \Sigma \Vert_{\mathrm{sp}}+ 2 L^2 \esp\Vert \gamma-\hat \gamma \Vert.
            \end{equation*}
    \end{itemize}
\end{lemma}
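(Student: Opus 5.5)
The four items of \Cref{lem:regressionLineaire} are proved in order. The first three follow from expanding the quadratic risk $R(\theta)=\esp[(Y-\theta^\top X)^2]$. The fourth requires a short perturbation argument.

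\textbf{First three items.} Write $R(\theta)=\esp[Y^2]-2\gamma^\top\theta+\theta^\top\Sigma\theta$. This is a convex quadratic whose gradient $2(\Sigma\theta-\gamma)$ vanishes exactly at $\theta_\star=\Sigma^{-1}\gamma$, which gives the first item. For the second item, set $\varepsilon=Y-\theta_\star^\top X$. Then
\[
\esp[\varepsilon X]=\gamma-\Sigma\theta_\star=0 ,
\]
and this uses no well-specification assumption. For the third item, expand
\[
R(\theta)=\esp\big[(\varepsilon+(\theta_\star-\theta)^\top X)^2\big].
\]
The cross term $2(\theta_\star-\theta)^\top\esp[\varepsilon X]$ vanishes by the second item. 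The remaining terms are $R(\theta_\star)+\Vert\theta-\theta_\star\Vert_\Sigma^2$.

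\textbf{Fourth item.} Introduce the population and empirical objectives
\[
L(\theta)=\Vert\theta\Vert_\Sigma^2-2\gamma^\top\theta,\qquad \hat L(\theta)=\Vert\theta\Vert_{\hat\Sigma}^2-2\hat\gamma^\top\theta .
\]
Note that $L(\theta)=R(\theta)-\esp[Y^2]$. Hence, by the third item, $R(\hat\theta)-R(\theta_\star)=L(\hat\theta)-L(\theta_\star)$. Since $\Vert\theta_\star\Vert\le L$, the point $\theta_\star$ is feasible, so $\hat L(\hat\theta)\le\hat L(\theta_\star)$. We then use the standard decomposition
\[
L(\hat\theta)-L(\theta_\star)\le [L(\hat\theta)-\hat L(\hat\theta)]+[\hat L(\theta_\star)-L(\theta_\star)]\le 2\sup_{\Vert\theta\Vert\le L}|L(\theta)-\hat L(\theta)| .
\]
For any $\Vert\theta\Vert\le L$,
\[
|L(\theta)-\hat L(\theta)|\le |\theta^\top(\Sigma-\hat\Sigma)\theta|+2|(\gamma-\hat\gamma)^\top\theta|\le L^2\Vert\Sigma-\hat\Sigma\Vert_{\mathrm{sp}}+2L\Vert\gamma-\hat\gamma\Vert .
\]
Taking expectations gives a bound of the stated form.

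\textbf{Constants.} This is the step needing care. The crude supremum bound gives $2L^2\,\esp\Vert\Sigma-\hat\Sigma\Vert_{\mathrm{sp}}+4L\,\esp\Vert\gamma-\hat\gamma\Vert$. I would try to remove the factor $2$ by exploiting unbiasedness. Because $\theta_\star$ is deterministic, $\esp[\hat L(\theta_\star)-L(\theta_\star)]=0$, so only the term evaluated at the random point $\hat\theta$ remains. This yields
\[
\esp L(\hat\theta)-L(\theta_\star)\le \esp\sup_{\Vert\theta\Vert\le L}\big(L(\theta)-\hat L(\theta)\big)\le L^2\,\esp\Vert\Sigma-\hat\Sigma\Vert_{\mathrm{sp}}+2L\,\esp\Vert\gamma-\hat\gamma\Vert .
\]
The stated $2L^2$ in front of the $\gamma$ term is then recovered either by assuming $L\ge1$ or by normalising so that $L\ge 1$. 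Otherwise the bound holds with $2L$, which I would report as the sharper form.
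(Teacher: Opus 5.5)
Your proof is correct and follows the paper's argument. The first three items come from expanding the quadratic risk. For the fourth, the paper likewise uses feasibility of $\theta_\star$, unbiasedness of $\hat R(\theta_\star)$ at the deterministic point $\theta_\star$, and a one-sided supremum over the ball, exactly as in your refined bound, so your crude two-sided bound is not needed.

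Your remark on the constant is also right: the paper's final step only yields $2L\,\esp\Vert\gamma-\hat\gamma\Vert$, and the printed $2L^2$ genuinely requires $L\ge 1$. For example, in one dimension with $\Sigma=\hat\Sigma=1$, $\theta_\star=L$ and $\hat\gamma=L\pm 2L$ equiprobably, the expected excess risk is $2L^2$ while the printed bound is $4L^3$, which fails for $L<1/2$.
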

\begin{proof}
The risk is optimal for $\nabla R(\theta_\star)=0$ (first order condition), thus
\begin{equation}
    \esp [X (X^\top\theta_\star-Y)]=0
\end{equation}
That equivalent to $\Sigma\theta=\gamma$. This result also gives us $\epsilon= Y- X^\top\theta_\star$ that satisfies 
\begin{equation*}
    \esp [\epsilon X ]=0.
\end{equation*}
Thus, 
\begin{align*}
    R(\theta)-R(\theta_\star)&= \esp [(Y-X^\top \theta)^2]-R(\theta_\star)\\
    &=\esp [(\epsilon-X^\top (\theta-\theta_\star))^2]-R(\theta_\star)\\
    &= \esp [\epsilon^2]-2\esp [\epsilon X^\top (\theta-\theta_\star)]+\esp [(X^\top (\theta-\theta_\star))^2]-R(\theta_\star)\\
    &=R(\theta_\star)+0+ \Vert \theta-\theta_\star\Vert_\Sigma^2 -R(\theta_\star)\\
    &= \Vert \theta-\theta_\star\Vert_\Sigma^2.
\end{align*}

We remark that 
    \begin{align*}
        R(\theta)&= \esp[(\theta^\top X-Y)^2]\\
        &= \esp[(\theta^\top X)^2]-2\esp[(\theta^\top X)Y]+\esp[Y^2]\\
        &= \Vert \theta\Vert_{\Sigma}^2-2 \gamma^\top\theta + \esp[Y^2].
    \end{align*}
The last term does not depend on $\theta$ thus we consider $R(\theta)= \Vert \theta\Vert_{\Sigma}^2-2 \gamma^\top\theta$
We denote by $\hat R(\theta)= \Vert \theta\Vert_{\hat \Sigma}^2-2 \hat \gamma^\top \theta $, 
\begin{align*}
     R(\hat \theta)&= R(\hat \theta)-\hat R(\hat \theta)+\hat R(\hat \theta)\\
     &\leq R(\hat \theta)-\hat R(\hat \theta)+\hat R( \theta_\star).
\end{align*}
If $\hat\gamma$ and $\hat \Sigma$ are unbiased we have by linearity $\esp \hat R( \theta_\star)=R( \theta_\star)$. Thus, we obtain 
\begin{equation*}
    \esp R(\hat \theta)- R( \theta_\star)\leq \esp[R(\hat \theta)-\hat R(\hat \theta)].
\end{equation*}
Considering minimization problem over constraint $\Vert\hat \theta\Vert\leq L$, we can obtain 
\begin{equation*}
    \esp R(\hat \theta)- R( \theta_\star)\leq \esp[\sup_{\Vert\hat \theta\Vert\leq L}\{R( \theta)-\hat R( \theta)\}].
\end{equation*}
Using that $R( \theta)-\hat R( \theta)= \Vert \theta\Vert_{\Sigma-\hat \Sigma}^2-2 (\gamma-\hat \gamma)^\top \theta$, we obtain 
\begin{equation*}
    \esp R(\hat \theta)- R( \theta_\star)\leq L^2 \esp \Vert\Sigma-\hat \Sigma \Vert_{\mathrm{sp}}+ 2L^2 \esp\Vert \gamma-\hat \gamma \Vert.
\end{equation*}
\end{proof}

\paragraph{Consistency}
\begin{proof}[Proof of \Cref{thm:consistency}]
    For $j,i\in[d]$, as an unbiased estimator $\hat\gamma^{\mathrm{debias}}_{i},\hat\Sigma^{\mathrm{debias}}{i,j}$ converge component wise to the true $\gamma_i, \Sigma_{i,j}$ for index such that $\Pi_{ij}>0$. The consistency result is just an application of continuous mapping theorem. For the CW estimator, we can just observe that $N_{ij}$, the number of observations  where $i,j$ are observed, satisfies $n/N_{i,j}\to 1/\Pi_{ij}$ almost surely if $\Pi_{ij}>0$. 
\end{proof}

\paragraph{Slow rate}
\begin{propo}\label{prop:non-ass} Assume that $\Vert \theta_\star^{(K+1)}\Vert\leq L$, $M^2=\max(\esp [\Vert X\Vert_2^4,\esp[Y^2\Vert X\Vert_2^2])<+\infty$ and $\pi=\min_{lj\in\obs(K+1)} \Pi_{lj}$, then the solution of 
\begin{equation*}
            \hat \theta^{(K+1)} \in \argmin_{\Vert \hat \theta\Vert\leq L}\{\Vert \theta\Vert_{\hat \Sigma^{\mathrm{cw}}_{\obs(K+1)}}^2-2 \theta^\top{\hat \gamma_{\obs(K+1)}^{\mathrm{cw}}}\},
        \end{equation*}
gives us 
    \begin{equation*}
         R^{(K+1)}(\hat \theta^{(K+1)})- R_\star^{(K+1)}(\mathcal{F}^{(K+1)}_{\mathrm{lin}})= O\left(\sqrt{\frac{d^2}{\pi n}}\right).
    \end{equation*}
\end{propo}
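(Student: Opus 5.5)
The plan is to apply the last item of \Cref{lem:regressionLineaire} to the restricted problem on the coordinates $\obs(K+1)$. Under \Cref{as:mcar_iid}, the local risk of client $K+1$ is $R^{(K+1)}(\theta)=\E[(Y-X_{\obs(K+1)}^\top\theta)^2]$. Its population moments are $(\Sigma_{\obs(K+1)},\gamma_{\obs(K+1)})$, and its minimizer is $\theta^{(K+1)}_\star$ by \Cref{prop:best_linear}, with $\Vert\theta^{(K+1)}_\star\Vert\le L$ by assumption. The estimator is exactly the constrained empirical minimizer of the lemma, with $(\hat\Sigma,\hat\gamma)=(\hat\Sigma^{\mathrm{cw}}_{\obs(K+1)},\hat\gamma^{\mathrm{cw}}_{\obs(K+1)})$. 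The lemma's proof uses unbiasedness only through $\E\hat R(\theta_\star)=R(\theta_\star)$. So I would either work with the IPW estimators $\hat\Sigma^{\mathrm{debias}},\hat\gamma^{\mathrm{debias}}$, which are exactly unbiased, or account for the CW bias separately (see below). Modulo that, the excess risk is bounded by $L^2\E\Vert\Sigma_{\obs}-\hat\Sigma_{\obs}\Vert_{\mathrm{sp}}+2L^2\E\Vert\gamma_{\obs}-\hat\gamma_{\obs}\Vert$. It then remains to bound these two expected estimation errors by $O(\sqrt{d^2/(\pi n)})$.

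For the covariance term, I would bound the spectral norm by the Frobenius norm and use Jensen: $\E\Vert\Delta\Vert_{\mathrm{sp}}\le(\sum_{l,j\in\obs}\E\Delta_{lj}^2)^{1/2}$. For the IPW estimator each entry is an average of $n$ i.i.d.\ terms $M_lM_jX_lX_j/\Pi_{lj}$, so
\begin{equation*}
\E\Delta_{lj}^2\le \frac{\E[X_l^2X_j^2]}{n\Pi_{lj}}\le\frac{\E[X_l^2X_j^2]}{n\pi}.
\end{equation*}
Here I use that $M\indep X$ by \Cref{as:mcar_iid} and $\E[M_lM_j]=\Pi_{lj}$. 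Summing over $l,j$ gives at most $\E\Vert X\Vert_2^4/(n\pi)\le M^2/(n\pi)$. Analogously, $\E\Vert\gamma_{\obs}-\hat\gamma_{\obs}\Vert^2\le\E[Y^2\Vert X\Vert^2]/(n\pi)\le M^2/(n\pi)$. Both errors are therefore $O(M/\sqrt{n\pi})$. The $d$-dependence in the stated rate comes from the moment constant: under the natural normalization where each coordinate has bounded fourth moment, $M^2\asymp d^2$. This yields $O(\sqrt{d^2/(\pi n)})$ with constants $L^2$.

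The main obstacle is that the statement uses the CW estimator, whose normalization by $\hat\Pi_{lj}$ rather than $\Pi_{lj}$ breaks exact unbiasedness and, on the event $\hat\Pi_{lj}=0$, is ill-defined. I would handle this by conditioning on the masks $(M_i)_i$, i.e.\ on the counts $N_{lj}=n\hat\Pi_{lj}$. Given the masks, $M\indep X$ makes $\hat\Sigma^{\mathrm{cw}}_{lj}$ an average of $N_{lj}$ i.i.d.\ copies of $X_lX_j$, which is conditionally unbiased when $N_{lj}\ge1$. The lemma's argument then goes through conditionally on the masks and gives the bound with $\sum_{lj}\E[X_l^2X_j^2]/N_{lj}$ in place of the unconditional variance. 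Taking expectations, I would use a Chernoff bound on $N_{lj}\sim\mathrm{Bin}(n,\Pi_{lj})$: on $\{N_{lj}\ge n\pi/2\}$ this gives $2/(n\pi)$, and the complementary event has probability $e^{-n\pi/8}$. On that bad event the risk is controlled crudely, since $\Vert\hat\theta\Vert\le L$ keeps the excess risk bounded by $O(L^2\Vert\Sigma\Vert+L\Vert\gamma\Vert)$, so it contributes only an exponentially small term that is absorbed into the $O(\cdot)$.
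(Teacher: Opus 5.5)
Your proposal follows the paper's proof. Both use the last item of \Cref{lem:regressionLineaire}, bound the spectral norm by the Frobenius norm with Jensen, compute the entrywise conditional variance $\E[X_l^2X_j^2]/N_{lj}$ given the client labels, and control the binomial counts $N_{lj}$. Your conditioning on the masks, with the constraint $\Vert\hat\theta\Vert\le L$ absorbing the degenerate event, in fact supplies a justification the paper omits: the paper applies the unbiasedness-based lemma directly to $\hat\Sigma^{\mathrm{cw}}$, which is only conditionally unbiased when all $N_{lj}\ge 1$.

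Two small corrections:
\begin{enumerate}
\item The bad event needs a union bound over the at most $|\obs(K+1)|^2\le d^2$ pairs. Its probability is therefore $d^2e^{-n\pi/8}$, not $e^{-n\pi/8}$. This is still absorbed, because $\min(1,d^2e^{-n\pi/8})\le 3d/\sqrt{n\pi}$ once $L$ and $M$ are treated as constants, as in the paper's final display $6L^2M\big(d/\sqrt{\pi n}+(1-\pi)^{n/2}\big)$.
\item You do not need the heuristic $M^2\asymp d^2$. Your main term $O(L^2M/\sqrt{n\pi})$ is sharper than the paper's, whose extra $d$ comes from a loose $d^2$ factor in its Frobenius sum. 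Since $d\ge 1$, your bound implies the stated rate directly.
\end{enumerate}
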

\begin{proof}
    
Without loss of generality, we assume that we want recover $\theta_\star$ and that $\pi= \min_{l,j\in[d]} \Pi_{l,j}>0$. Then we consider, the following minimization program: 
\begin{equation*}
            \hat \theta \in \arg \min_{\Vert \hat \theta\Vert\leq L}\{\Vert \theta\Vert_{\hat \Sigma^{\mathrm{cw}}}^2-2  \theta^\top {\hat \gamma^{\mathrm{cw}}}\},
        \end{equation*}
That satisfies from \cref{lem:regressionLineaire}, 
\begin{equation*}
                \esp R(\hat \theta)- R( \theta_\star)\leq L^2 \esp \Vert\Sigma-\hat \Sigma^{\mathrm{cw}} \Vert_{\mathrm{sp}}+ 2 L^2 \esp\Vert \gamma-\hat \gamma^{\mathrm{cw}} \Vert.
\end{equation*}
Now, we can bound each term : 
\begin{align*}
    \esp \Vert\Sigma-\hat \Sigma^{\mathrm{cw}} \Vert_{\mathrm{sp}}&\leq \sqrt{\esp \Vert\Sigma-\hat \Sigma^{\mathrm{cw}} \Vert_{\mathrm{sp}}^2}\\
    &\leq \sqrt{\esp \Vert\Sigma-\hat \Sigma^{\mathrm{cw}} \Vert_{\mathrm{Fr}}^2},
\end{align*}
using Jensen inequality then the majoration $\Vert\Vert_{\mathrm{Sp}}^2\leq \Vert\Vert_{\mathrm{Fr}}^2$ (Frobenius norm). 
The Frobenius correspond to the sum of the square of each components, thus we have 
\begin{align*}
    A_{lj}&= (\esp [\Vert\Sigma-\hat \Sigma^{\mathrm{cw}} \Vert_{\mathrm{Fr}}^2|H_1,...,H_n])\\
    &\leq  \ind_{N_{lj}=0} \Sigma_{lj}^2+\ind_{N_{lj}>0}\esp \left[ \left(\Sigma_{lj}-1/N_{lj}\sum_{i } \ind_{l,j\in \obs(H_i)}X_{il}X_{ij}\right)^2\right]\\
    &\leq \ind_{N_{lj}=0}\Sigma_{lj}^2+\frac{\ind_{N_{lj}>0}}{N_{lj}}\esp[X_l^2X_j^2],
\end{align*}
using standart variance of the empirical means arguments. Using that $N_{lj}$ is binomial law of parameter bound at least $\pi$ and $n$. We have 
\begin{align*}
    \esp A_{lj}&\leq(1-\pi)^n\Sigma_{lj}^2+\frac{2}{n\pi}\esp[X_l^2X_j^2].
\end{align*}
Summing over $l,j$ gives us 
\begin{equation*}\label{eq:boundMatrix}
    \esp \Vert\Sigma-\hat \Sigma^{\mathrm{cw}} \Vert_{\mathrm{Fr}}^2\leq d^2(1-\pi)^n\mathrm{Tr}(\Sigma^2)+ \frac{2M^2 d^2}{n\pi},
\end{equation*}
because $ \sum_{lj}\esp[X_l^2X_j^2]= \esp [\Vert X\Vert^4]\leq M^2$. For the second term, we have with a similar approach, 
\begin{equation*}
    \esp\Vert \gamma-\hat \gamma^{\mathrm{cw}} \Vert^2\leq d(1-\pi)^n\Vert \gamma\Vert_2^2+ \frac{2M^2d}{n\pi}
\end{equation*}
Summing these two inequality and using triangle inequality gives us  
\begin{equation*}
                \esp R(\hat \theta)- R( \theta_\star)\leq 6L^2 M \frac{d}{\sqrt{\pi n}}+ 2L^2 d(1-\pi)^{n/2}(\Vert \gamma\Vert_2+ \sqrt{\mathrm{Tr}(\Sigma^2)}).
\end{equation*}
using $\Vert \gamma\Vert_2^2+ \mathrm{Tr}(\Sigma^2)\leq M$ (Jensen inequality), we have 
\begin{equation*}
                \esp R(\hat \theta)- R( \theta_\star)\leq 6L^2 M \left(\frac{d}{\sqrt{\pi n}}+ (1-\pi)^{n/2}\right).
\end{equation*}
That concludes the proof. 
\end{proof}

\section{Proof of \Cref{sec:ITR} and \Cref{sec:Comparison}}

\paragraph{Prior result}
\begin{theorem}[From \citet{mourtada2023local} theorem 4]\label{thm:jaouad}
    Assuming $(X_i,Y_i)_{i\in[n+1]}$ exchangeable data, $\esp X_1X_1^\top=\Sigma$ exists and $|Y|\leq M$ almost surely,  For the predictor $\hat f(x) = T_M(\hat\theta_\lambda^\top x)$, with $\hat\theta_\lambda$ the ridge predictor learned on data $(X_i,Y_i)_{i\in[n]}$, we have 
    \begin{equation*}
        \esp[(Y_{n+1}-\hat f ( X_{n+1}))^2]\leq \inf_{\theta\in \R^d} \left\{\esp[(Y_{n+1}-\theta^\top X_{n+1})^2]+ \lambda \Vert\theta\Vert^2_2  \right\} + \frac{8M^2}{n}\mathrm{Tr}(\Sigma(\Sigma +\lambda I)^{-1}).
    \end{equation*}
\end{theorem}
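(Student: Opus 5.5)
The plan is the classical leave-one-out (transductive) argument, relying only on exchangeability of the $n+1$ points. Write $Z_i=(X_i,Y_i)$ for $i\in[n+1]$. Let $S=\sum_{i=1}^{n+1}X_iX_i^\top$ and $A=S+n\lambda I$. Let $\bar\theta=A^{-1}\sum_{i=1}^{n+1}X_iY_i$ be the ridge estimator fitted on all $n+1$ points with penalty $n\lambda$. For each $i$, let $\hat\theta_{-i}$ be the ridge estimator fitted on the other $n$ points with the same penalty $n\lambda$; this is exactly the estimator of the statement when $i=n+1$.

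\textbf{Step 1 (symmetrization).} By exchangeability, the joint law of $(\hat\theta_{-i},Z_i)$ does not depend on $i$. Hence
\[
\esp[(Y_{n+1}-\hat f(X_{n+1}))^2]=\frac{1}{n+1}\,\esp\sum_{i=1}^{n+1}\big(Y_i-T_M(\hat\theta_{-i}^\top X_i)\big)^2 .
\]

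\textbf{Step 2 (leave-one-out identity).} Let $r_i=Y_i-\bar\theta^\top X_i$ be the full-data residual and $h_i=X_i^\top A^{-1}X_i\in[0,1)$ the leverage. The Sherman--Morrison formula gives $Y_i-\hat\theta_{-i}^\top X_i=r_i/(1-h_i)$.

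\textbf{Step 3 (pointwise inequality using truncation).} This is the key step. Since $|Y_i|\le M$, truncation can only reduce the error and caps it at $4M^2$, so
\[
\big(Y_i-T_M(\hat\theta_{-i}^\top X_i)\big)^2\le \min\big(r_i^2/(1-h_i)^2,\,4M^2\big).
\]
I will show $\min\big(r^2/(1-h)^2,4M^2\big)\le r^2+8M^2h$, splitting into three cases.
\begin{itemize}
\item If $r^2\ge 4M^2$, the minimum is at most $4M^2\le r^2$.
\item If $h\ge 1/2$, the minimum is at most $4M^2\le 8M^2h$.
\item Otherwise $r^2<4M^2$ and $h<1/2$. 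Then $r^2/(1-h)^2-r^2=r^2h(2-h)/(1-h)^2$, and this must be bounded by $8M^2h$.
\end{itemize}
The main obstacle is the third case, where the constant has to come out as $8$. Using $r^2\le 4M^2$ crudely gives a constant $4\cdot 6=24$ instead. To recover $8$, I would first refine the case split, for instance by choosing the threshold on $h$ differently. If that fails, I would use that $\min$ of the two quantities is at most any convex combination of them. If the sharp constant still resists, the bound holds with a worse absolute constant, and only the numerical factor is at stake.

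\textbf{Step 4 (bias term).} $\bar\theta$ minimizes $\sum_i(Y_i-\theta^\top X_i)^2+n\lambda\|\theta\|^2$. So for any fixed $\theta$,
\[
\sum_i r_i^2\le\sum_i(Y_i-\theta^\top X_i)^2+n\lambda\|\theta\|^2 .
\]
Taking expectations and dividing by $n+1$ gives at most $\esp[(Y-\theta^\top X)^2]+\lambda\|\theta\|^2$. Taking the infimum over $\theta$ yields the first term of the bound.

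\textbf{Step 5 (variance term).} We have $\sum_i h_i=\mathrm{Tr}\big(S(S+n\lambda I)^{-1}\big)$. The map $S\mapsto \mathrm{Tr}\big(S(S+n\lambda I)^{-1}\big)$ is operator-concave, so Jensen with $\esp S=(n+1)\Sigma$ gives
\[
\esp\sum_i h_i\le \mathrm{Tr}\big((n+1)\Sigma((n+1)\Sigma+n\lambda I)^{-1}\big)\le\mathrm{Tr}\big(\Sigma(\Sigma+\tfrac{n}{n+1}\lambda I)^{-1}\big).
\]
Dividing by $n+1$ and adjusting the penalty's normalization by a factor $n/(n+1)$ (or running the argument directly with penalty $(n+1)\lambda$ in $A$) gives $\frac{8M^2}{n}\,\mathrm{Tr}\big(\Sigma(\Sigma+\lambda I)^{-1}\big)$. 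Combining Steps 1 through 5 concludes the proof.
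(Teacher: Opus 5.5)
Your overall plan is correct, and it is the leave-one-out exchangeability argument that the paper delegates to \citet{mourtada2023local}; the paper gives no proof of its own. Steps 1, 2, 4 and 5 are correct. In Step 5 no renormalization is needed, and switching to penalty $(n+1)\lambda$ would change the estimator. Eigenvalue by eigenvalue, $\frac{1}{n+1}\cdot\frac{s}{s+\frac{n}{n+1}\lambda}=\frac{s}{(n+1)s+n\lambda}\le\frac{s}{n(s+\lambda)}$, so your Jensen bound already gives $\frac{8M^2}{n}\mathrm{Tr}(\Sigma(\Sigma+\lambda I)^{-1})$.

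The genuine gap is Step 3. It is the only place where the constant $8$ is produced, and you leave it open.

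\begin{itemize}
\item \emph{Your third case is false as set up.} For $r^2$ near $4M^2$ and $h$ near $1/2$, the quantity $r^2h(2-h)/(1-h)^2$ is about $12M^2$, while $8M^2h$ is about $4M^2$.
\item \emph{Moving the threshold on $h$ cannot help.} The cap $4M^2\le 8M^2h$ requires $h\ge 1/2$. For every fixed $h\in(0,1/2)$, $\sup_{r^2<4M^2} r^2h(2-h)/(1-h)^2=4M^2h(2-h)/(1-h)^2>8M^2h$, since $2-h>2(1-h)^2$.
\item \emph{A convex combination with fixed weights also fails.} Take $r=h=0$: any positive weight on the cap violates the bound.
\end{itemize}

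The missing idea is that the switch between the two bounds must depend on $r$ as well as $h$. Use the cap $4M^2$ whenever $r^2+8M^2h\ge 4M^2$. Otherwise $r^2<4M^2(1-2h)$, which forces $h<1/2$, and then
\[
r^2(2-h)<4M^2(1-2h)(2-h)=4M^2(2-5h+2h^2)\le 8M^2(1-h)^2 .
\]
Multiplying by $h/(1-h)^2$ gives exactly $r^2/(1-h)^2-r^2=r^2h(2-h)/(1-h)^2\le 8M^2h$. Hence
\[
\min\big(r^2/(1-h)^2,\,4M^2\big)\le r^2+8M^2h \quad\text{for all } r\in\R,\ h\in[0,1).
\]
The constant $8$ cannot be lowered in this inequality: take $r^2=4M^2(1-h)^2$ and let $h\to 0$. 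This is why your crude split loses a factor.

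With this inequality in place, your remaining steps yield the statement. Your fallback of accepting a worse absolute constant would not prove the theorem as stated.
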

The proof, provided by \citet{mourtada2023local}, only used exchangeability arguments. 

\paragraph{Any imputation: \Cref{thm:imputeechangeable} }

\begin{proof}[Proof of \Cref{thm:imputeechangeable}] The exchangeability of imputation function lead to exchangeability of data $(\tilde X_i, Y_i)_{i\in [n+1]}$. Thus 
    \begin{equation*}
        \esp[(Y_{n+1}-\hat f_\lambda^{\mathcal{I}} ( \tilde X_{n+1}))^2]\leq \inf_{\theta\in \R^d} \left\{\esp[(Y_{n+1}-\theta^\top \tilde X_{n+1})^2]+ \lambda \Vert\theta\Vert^2_2  \right\} + \frac{8M^2}{n}\mathrm{Tr}(\Sigma^{\mathcal{I}}(\Sigma^{\mathcal{I}} +\lambda I)^{-1}).
    \end{equation*}
    with $\Sigma^{\mathcal{I}}=\esp \tilde X_1\tilde X_1^\top $ that exist. Futhermore, if we consider $\theta'$ the minimizer of $\theta \to \esp[(Y_{n+1}-\theta^\top \tilde X_{n+1})^2]$, the first order condition give us 
    \begin{equation*}
        \esp[(Y_{n+1}-\theta'^\top \tilde X_{n+1})X_{n+1}]=0.
    \end{equation*}
    Then 
    \begin{align*}
        \esp[(Y_{n+1}-\theta^\top \tilde X_{n+1})^2]&= \esp[(Y_{n+1}-\theta'^\top \tilde X_{n+1}+ (\theta'-\theta)^\top \tilde X_{n+1})^2]\\
        &= \esp[(Y_{n+1}-\theta'^\top \tilde X_{n+1})^2]+\esp[( (\theta'-\theta)^\top \tilde X_{n+1})^2]\\
        &= R_\star(\F^{\mathcal{I}})+ \Vert \theta'-\theta\Vert_{\Sigma^{\mathcal{I}}}^2.
    \end{align*}
    That leads to the desired result, with 
    \begin{equation}\label{eq:biasI}
        B_\lambda^{\mathcal{I}}= \inf_{\theta}\{\Vert \theta'-\theta\Vert_{\Sigma^{\mathcal{I}}}^2+\Vert \theta\Vert_{2}^2\}.
    \end{equation}

\end{proof}

\paragraph{Optimal linear imputation}

\begin{proof}[Proof of \Cref{prop:0vsLocal}]
    Marginalizes over the client gives us 
    \begin{align*}
            \esp [(Y-\tilde X^\top \theta_\star)^2]&=\esp [(Y- S_HX^\top \theta_\star)^2]\\
            &= \esp [(Y- X^\top S_H \theta_\star)^2]\\
            &=\sum_k \rho_k \esp [(Y-\tilde X^\top \theta)^2]\\
            &=\sum_k \rho_k R^{(k)}_\star(\F^{(k)}_{\mathrm{lin}})\\
            &= R_\star(\F_{\mathrm{lin}}).
    \end{align*}
    Thus $R_\star(\F_{\mathrm{lin}})\geq R_\star(\F^{I^\mathrm{opt}})$. We remark that function of $\F^{I^\mathrm{opt}}$ are linear by client thus $R_\star(\F_{\mathrm{lin}})= R_\star(\F^{I^\mathrm{opt}})$ and the minimizeur is $\theta_\star$. 
\end{proof}
\begin{proof}[Proof of \Cref{thm:ImputOracleDB}]
\textbf{Step 1:} lets starts by showing that $\Sigma^{{I^\mathrm{opt}}}\preceq \Sigma$: 
    \begin{align*}
\Sigma_k^{\mathcal{I}^{\mathrm{opt}}}
    &=\esp[ \tilde X \tilde X^\top|H=k]\\
    &= \begin{pmatrix}
    \Sigma_{\obs(k)} & \Sigma_{\obs(k),\mis(k)} \\
    \Sigma_{\mis(k),\obs(k)} & \Sigma_{\mis(k),\obs(k)}\Sigma_{\obs(k)}^{-1}\Sigma_{\obs(k),\mis(k)}
\end{pmatrix}.
\end{align*}
We remark that 
\begin{align*}
    \Sigma-\Sigma_k^{\mathcal{I}^{\mathrm{opt}}}= \begin{pmatrix}
    0 & 0 \\
    0 & I-\Sigma_{\mis(k),\obs(k)}\Sigma_{\obs(k)}^{-1}\Sigma_{\obs(k),\mis(k)}
\end{pmatrix}\succeq0,
\end{align*}
using the Schur complement properties. Then, $\Sigma_k^{\mathcal{I}^{\mathrm{opt}}}\preceq \Sigma$ that implies that $\Sigma^{\mathcal{I}^{\mathrm{opt}}}=\sum_{k=1}^K \rho_k \Sigma_k^{\mathcal{I}^{\mathrm{opt}}}\preceq \Sigma$.

\textbf{Step 2: freedom degree} Using monotony of trace operator function $A\to \mathrm{df}(A,\lambda):=\mathrm{Tr}(A(A+\lambda I)^{-1})$, for all $\lambda>0$, $$\mathrm{df}(\Sigma^{\mathcal{I}^{\mathrm{opt}}},\lambda)\leq \mathrm{df}(\Sigma,\lambda).$$

\textbf{Step 3: bias}: We have using \Cref{eq:biasI} with $\theta'=\theta_\star$
\begin{align*}
    B_\lambda^{\mathcal{I}^{\mathrm{opt}}}&= \inf_{\theta}\{\Vert \theta_\star-\theta\Vert_{\Sigma^{\mathcal{I}^{\mathrm{opt}}}}^2+\Vert \theta\Vert_{2}^2\}\\
    &\leq \inf_{\theta}\{\Vert \theta_\star-\theta\Vert_{\Sigma}^2+\Vert \theta\Vert_{2}^2\},
\end{align*}
using $\Sigma^{{I^\mathrm{opt}}}\preceq \Sigma$. 

\end{proof}

\paragraph{Local predictor}
\begin{proof}[Proof of \cref{prop:local_predictor_gen}]
    We start with the decomposition, 
    \begin{equation*}\label{eq:decompo_proof_local}
        R(\hat f)= \sum_k \rho_k \esp[(Y-\hat f_{\lambda_k}^{k} (X_{\obs(k)}))^2].
    \end{equation*}
    Denoting $N_k$ the random integer corresponding to the number observation for client $k$, we have using \Cref{thm:jaouad}, and considering that $f_{\lambda_k}^{k} (X_{\obs(k)})=0$ if $N_k=0$, we have 
    \begin{align*}
        \ind_{N_k=0}\esp Y^2&\leq \esp[(Y-\hat f_{\lambda_k}^{k} (X_{\obs(k)}))^2|N_k]\\
        &\leq \ind_{N_k=0}\esp Y^2+ R_\star^{(k)}(\F^{(k)}_\mathrm{lin})+B^{(k)}_{\lambda_k}+ \frac{8 \ind_{N_k>0}M^2}{N_k} d^{k}_{\lambda_k}.
    \end{align*}
We take the expectation in these inequality, using $\esp \ind_{N_k=0}=(1-\rho_k)^n$ and $\esp [\ind_{N_k=0}/N_k]\leq 2/(n\rho_k)$ (properties of Binomial law), we have 
\begin{align*}
        (1-\rho_k)^n\esp Y^2&\leq \esp[(Y-\hat f_{\lambda_k}^{k} (X_{\obs(k)}))^2|N_k]\\
        &\leq (1-\rho_k)^n\esp Y^2+ R_\star^{(k)}(\F^{(k)}_\mathrm{lin})+B^{(k)}_{\lambda_k}+ \frac{16 M^2}{n\rho_k} d^{k}_{\lambda_k} .
    \end{align*}
We conclude using \eqref{eq:decompo_proof_local}.
\end{proof}

\begin{proof}[Proof of \Cref{prop:0vsLocal}]
Considering the imputation by $0$, for the bias, we have for all $\theta$,
\begin{align*}
    \esp[(Y- \tilde X^\top \theta)^2]+\lambda\Vert\theta\Vert_2^2&= \sum_k \rho_k\esp[(Y-  X_{\obs(k)}^\top \theta)^2]+\lambda\Vert\theta\Vert_2^2\\
    &= \sum_k \rho_k\left(\esp[(Y-  X_{\obs(k)}^\top \theta)^2]+\lambda/\rho_k\Vert\theta\Vert_2^2\right)\\
    &= \sum_k \rho_k\left(\esp[(Y-  X_{\obs(k)}^\top \theta)^2]+\lambda_k\Vert\theta\Vert_2^2\right)\\
    &\geq \sum_k \rho_k \inf_{\theta}\left\{\esp[(Y-  X_{\obs(k)}^\top \theta)^2]+\lambda_k\Vert\theta\Vert_2^2\right\}\\
    &\geq \sum_k \rho_k(R_\star^{(k)}(\F^{(k)}_\mathrm{lin})+B^{(k)}_{\lambda_k}).
\end{align*}
Then, 
\begin{equation*}
    B_\lambda^{\mathcal{I}^0}+ R_\star(\F^{\mathcal{I}^0})\geq   \sum_k \rho_k(R_\star^{(k)}(\F^{(k)}_\mathrm{lin})+B^{(k)}_{\lambda_k}).
\end{equation*}
For the effective dimension, we remark that 
\begin{equation*}
    \Sigma^{\mathcal{I}^0}= \sum_k \rho_k \Sigma^{(k)}, 
\end{equation*}
where $\Sigma^{(k)} $ is the covariance matrix of $X$ masked by $\obs (k)$. The lower bound is obtain using the Jensen inequality on $A\to \mathrm{Tr}(A(A+\lambda I)^{-1})$ that is concave over the set of non negative matrix. For the upper bound, we compute, 
\begin{align*}
    \mathrm{Tr}(\Sigma^{\mathcal{I}^0}(\Sigma^{\mathcal{I}^0}+\lambda I)^{-1})&= \sum_k \rho_k \mathrm{Tr}(\Sigma_k(\Sigma^{\mathcal{I}^0}+\lambda I)^{-1})\\
    &\leq \sum_k  \mathrm{Tr}(\rho_k\Sigma_k(\rho_k\Sigma_k+\lambda I)^{-1})\\
    &= \sum_k  \mathrm{Tr}(\Sigma_k(\Sigma_k+\lambda/\rho_k I)^{-1})\\
    & = \sum_k d_{\lambda_k}^{(k)},
\end{align*}
using $\rho_k\Sigma_k\preceq \Sigma^{\mathcal{I}^0} $ and the monotony of the inverse.

\end{proof}

\section{Typical case}

\begin{proof}[Proof of \Cref{prop:typical_case}]
    Considering the imputation by $0$ . Let's start by computing $\esp[(Y- \tilde X^\top \theta)^2]$ in the typical case (integrated over $P= (P_{jk})_{j\in[d],k\in[K]}$). 
\begin{align*}
    \esp_P \esp[(Y- \tilde X^\top \theta)^2] &= \esp _P \esp[(Y- \tilde X^\top \theta)^2]\\
    &= \sum \rho_k \esp _P\esp[(Y-  X_{\obs(k)}^\top \theta)^2]\\
    &= \sum \rho_k \esp\esp _P[(Y-  X_{\obs(k)}^\top \theta)^2],
\end{align*}
using Fubini theorem on non negative random variable. 
Furthermore, denoting $P_k = (P_{jk})_{j\in[d]}$, we have 
\begin{align*}
    \esp_P[(Y-  X_{\obs(k)}^\top \theta)^2]&= \esp_{P_k}[(Y-  X^\top (P_k\odot\theta))^2]\\
    &= \esp_{P_k}[(Y-  X^\top (\esp P_k\odot\theta))^2]+ \esp_{P_k}[X^\top (P_k-\esp P_k)\odot\theta))^2]\\
    &= \esp_{P_k}[(Y-  X^\top (\esp P_k\odot\theta))^2]+ \Vert \theta\Vert_{(\esp (P_k-\esp P_k)(P_k-\esp P_k)^\top)\odot XX^\top }^2.
\end{align*}
We have $\esp P_k= \tau U$, with $U=(1)_{j\in[d]}$, and 
\begin{align*}
    \esp (P_k-\esp P_k)(P_k-\esp P_k)^\top &= \esp P_kP_k^\top - \tau^2 UU^\top\\
    &= \tau^2 UU^\top + \tau(1-\tau) I - \tau^2 UU^\top\\
    &= \tau(1-\tau) I,
\end{align*}
because, $(\esp P_kP_k^\top)_{jl}= \tau^2 \ind_{j\neq l}+ \tau \ind_{j= l}$. Thus, 
\begin{align*}
    \esp_P[(Y-  X_{\obs(k)}^\top \theta)^2]= \esp_{P_k}[(Y-  X^\top (\tau\theta))^2]+ \Vert \theta\Vert_{(\tau(1-\tau) I))\odot XX^\top }^2.
\end{align*}
Then, 
\begin{align*}
    \esp_P \esp[(Y- \tilde X^\top \theta)^2]&= \esp [(Y-  X^\top (\tau\theta))^2]+ \Vert \theta\Vert_{(\tau(1-\tau) I))\odot \Sigma }^2\\
    &= \esp [(Y-  X^\top (\tau\theta))^2]+ \Vert \theta\Vert_{\tau(1-\tau)\mathrm{diag}(\Sigma) }^2\\
    &= \esp [(Y-  X^\top (\tau\theta))^2]+ \tau(1-\tau)\Vert \theta\Vert_{2 }^2
\end{align*}
Then, 
\begin{equation}\label{eq:integratedrisk}
    \esp_P \esp[(Y- \tilde X^\top \theta)^2]= \esp [(Y-  X^\top (\tau\theta))^2]+ \tau(1-\tau)\Vert \theta\Vert_{2 }^2
\end{equation}
Thus, we consider $\tau \theta' =\argmin_{\theta}\{\esp [(Y-  X^\top \theta)^2]+ \lambda'\Vert \theta\Vert_{2 }^2\}$, that is not random.  
We have by definition, 
\begin{equation*}
    B_\lambda^{\mathcal{I}^0}+ R_\star(\F^{\mathcal{I}^0})\leq \esp[(Y- \tilde X^\top \theta')^2|P]+ \lambda\Vert \theta'\Vert_{2 }^2,
\end{equation*}
almost surely. Thus, 
\begin{equation*}
    \esp B_\lambda^{\mathcal{I}^0}+ \esp R_\star(\F^{\mathcal{I}^0})\leq \esp[(Y- \tilde X^\top \theta')^2]+ \lambda\Vert \theta'\Vert_{2 }^2. 
\end{equation*}
Using, \eqref{eq:integratedrisk}, 
\begin{align*}
    \esp B_\lambda^{\mathcal{I}^0}+ \esp R_\star(\F^{\mathcal{I}^0})&\leq \esp[(Y-  X^\top (\tau \theta'))^2]+ (\lambda+\tau(1-\tau))\Vert \theta'\Vert_{2 }^2\\
    &=\esp[(Y-  X^\top (\tau \theta'))^2]+ \lambda'\Vert \tau\theta'\Vert_{2 }^2\\
    &= \inf_\theta \{\esp [(Y-  X^\top \theta)^2]+ \lambda'\Vert \theta\Vert_{2 }^2\},
\end{align*}
by definition of $\theta'$. We conclude using $R_\star(\F_{\mathrm{lin}})\leq R_\star(\F^{\mathcal{I}^0})$.

\end{proof}


\end{document}